\DeclareFontFamily{OT1}{pzc}{}
\DeclareFontShape{OT1}{pzc}{m}{it}{<-> s * [1.10] pzcmi7t}{}
\DeclareMathAlphabet{\mathpzc}{OT1}{pzc}{m}{it}
\DeclareSymbolFontAlphabet{\mathbb}{AMSb}
\DeclareSymbolFontAlphabet{\mathbbl}{bbold}
\numberwithin{equation}{section}
\numberwithin{figure}{section}
\theoremstyle{plain}
\newtheorem{thm}[equation]{Theorem}
\theoremstyle{remark}
\newtheorem{rem}[equation]{Remark}
\theoremstyle{definition}
\newtheorem{defn}[equation]{Definition}
\theoremstyle{definition}
\newtheorem{example}[equation]{Example}
\theoremstyle{plain}
\newtheorem{cor}[equation]{Corollary}
\theoremstyle{plain}
\newtheorem{prop}[equation]{Proposition}
\theoremstyle{plain}
\newtheorem{lem}[equation]{Lemma}
\theoremstyle{plain}
 \theoremstyle{definition}
\theoremstyle{definition}
\newtheorem{exs}[equation]{Examples}
\def\d{\partial}
\def\uo{\underline{0}}
\def\ul{\underline{\ell}}
\def\uk{\underline{\kappa}}
\def\ut{\underline{t}}
\def\um{\underline{m}}
\def\ux{\underline{x}}
\def\uh{\underline{h}}
\def\ay{\mathbb{i}}
\def\QQ{\mathbb{Q}}
\def\RR{\mathbb{R}}
\def\CC{\mathbb{C}}
\def\ZZ{\mathbb{Z}}
\def\PP{\mathbb{P}}
\def\uz{\underline{z}}
\def\ut{\underline{t}}
\def\b{\bullet}
\def\V{\mathcal{V}}
\def\Y{\mathcal{Y}}
\def\X{\mathcal{X}}
\def\Y{\mathcal{Y}}
\def\A{\mathcal{A}}
\def\H{\mathcal{H}}
\def\HH{\mathbb{H}}
\def\NN{\mathbb{N}}
\def\ay{\mathbf{i}}
\def\ve{\varepsilon}
\def\IH{\mathrm{IH}}
\def\co{\mathcal{O}}
\def\gr{\mathrm{Gr}}
\def\K{\mathcal{K}}
\def\L{\mathcal{L}}
\def\E{\mathcal{E}}
\def\NP{\mathbbl{\Delta}}
\def\ua{\underline{a}}
\def\LL{\mathbb{L}}
\def\BB{\mathbb{B}}
\def\NP{\mathbbl{\Delta}}
\def\MHM{\mathrm{MHM}}
\def\sp{\mathrm{sp}}
\def\can{\mathrm{can}}
\def\van{\mathrm{van}}
\def\be{\mathbf{e}}
\def\uw{\underline{w}}
\def\sm{\setminus}
\def\ubb{\underline{\beta}}
\def\WW{\mathbb{W}}
\def\WP{\mathbb{WP}}
\def\UW{\underline{W}}
\def\UZ{\underline{Z}}
\def\ue{\underline{e}}
\def\pr{\mathrm{pr}}
\def\cs{\mathcal{S}}
\def\csb{\overline{\cs}}
\def\UX{\underline{X}}
\def\f{f'}
\theoremstyle{definition}
\newtheorem*{thx}{Acknowledgments}
\begin{document}

\title{Remarks on eigenspectra of isolated singularities}

\author[B. Castor]{Ben Castor}
\address{Department of Mathematics, Kenyon College, Gambier, OH}
\email{castor1@kenyon.edu}

\author[H. Deng]{Haohua Deng}
\address{Mathematics Department, Duke University, Durham, NC}
\email{haohua.deng@duke.edu}

\author[M. Kerr]{Matt Kerr}
\address{Washington University in St.~Louis, Department of Mathematics and Statistics, St.~Louis, MO}
\email{matkerr@wustl.edu}

\author[G. Pearlstein]{Gregory Pearlstein}
\address{Dipartimento di Matematica, Universit\`a di Pisa, Pisa, Italy}
\email{greg.pearlstein@unipi.it}

\subjclass[2000]{14D06, 14D07, 14J17, 32S25, 32S35}
\begin{abstract}
We introduce a simple calculus, extending a variant of the Steenbrink spectrum, to describe Hodge-theoretic invariants for smoothings of isolated singularities with relative automorphisms.  After computing these ``eigenspectra'' in the quasi-homogeneous case, we give three applications to singularity bounding and monodromy of VHS.
\end{abstract}
\maketitle

%%%%%%%%%%%%%%%%%%%%%%%%%%%%%%%
%%%%%%%%%%%%%%%%%%%%%%%%%%%%%%%
\section*{Introduction}\label{S1}

Recent work of the third author and R.~Laza on the Hodge theory of degenerations \cite{KL1,KL2} re-examined the mixed Hodge theory of the Clemens-Schmid and vanishing-cycle sequences, with an emphasis on applications to limits of period maps and compactifications of moduli.  When a degenerating family of varieties has a finite group $G$ acting on its fibers, these become exact sequences in the category of mixed Hodge structures with $G\times\mu_k$-action, where $k$ is the order of $T_{\mathrm{ss}}$ (the semisimple part of monodromy).  These kinds of situations often show up in generalized Prym or cyclic-cover constructions; for instance, instead of using the period map attached to a family of varieties, one may want to use the ``exotic'' period map arising from a cyclic cover branched along the family (e.g.~\cite{ACT1,ACT2,CJL,DM,DK}).
 
In this note we explain how to encode the contributions of isolated singularities with $G$-action to the vanishing cohomology in terms of \emph{$G$-spectra} (Defn.~\ref{A11}).  These are formal sums (with positive integer coefficients) of triples in $\mathbb{Z} \times \mathbb{Q} \times \mathfrak{R}$, where $\mathfrak{R}$ is the set of irreducible representations of $G$. The term \emph{eigenspectrum} (Defn.~\ref{A12}) refers to the specific case of a cyclic group $\langle g\rangle$ with fixed generator.  (At the end of \S\ref{S3} and in most of \S\ref{S5} a larger group $\mathcal{G}$ nontrivially permutes the singularities; $G$ always denotes a subgroup stabilizing them.)

In \S\ref{S1} this formalism emerges naturally from the general setting of a proper morphism of 1-parameter degenerations over a disk, by specializing the morphism to an automorphism $g\in \mathrm{Aut}(\X/\Delta)$ fixing a singularity $x\in X_0$.  The eigenspectrum $\sigma_{f,x}^g$ simply records the dimensions of simultaneous eigenspaces of $g^*$ and $T_{\mathrm{ss}}$ in the $(p,q)$-subspaces of $V_x$ (Defn.~\ref{A12}).  We give a general computation in \S\ref{S2} of $\sigma_{f,x}^g$ in the case of a quasi-homogeneous singularity, in terms of a monomial basis for the associated Jacobian ring (Cor.~\ref{B7}). 

In the remaining sections, we give three applications.  The first, in \S\ref{S3}, is to bounding the number of nodes on Calabi-Yau hypersurfaces in weighted projective spaces (Thm.~\ref{C4}) by passing to cyclic covers. There is already a large literature on node-bounding, including \cite{JR,KL2,Mi,Sc,Va,vS2}. In the case of $\PP^{n+1}$, our approach does not improve Varchenko's bound (e.g., $135$ nodes for a quintic hypersurface in $\PP^4$), but does yield a simpler proof.  However, we do obtain the interesting result (in Thm.~\ref{C9}) that a CY hypersurface in $\PP^{n+1}$ with isolated singularities and symmetric under $\mathfrak{S}_{n+2}$ cannot contain a node whose $\mathfrak{S}_{n+2}$-orbit has cardinality $(n+2)!$ (i.e.~trivial stabilizer).

The other two applications concern codimension-one monodromy phenomena for VHSs over moduli of configurations of points and hyperplanes.  In \S\ref{S4}, the moduli space is $M_{0,2n}$, with the VHS arising from cyclic covers of $\PP^1$ branched along the $2m$ ordered points.  Propositions \ref{D5}-\ref{D6} and Example \ref{D6a} describe the eigenspectra, LMHS and monodromy types along boundary strata of certain compactifications $\overline{M}^H_{0,2n}$ due to Hassett \cite{Ha}, generalizing a computation of \cite{GKS}.  The cases $m=2,3,4,$ and $6$ go back to work of Deligne and Mostow \cite{DM} and feature a period map (isomorphism) to an arithmetic ball quotient.  While the global/extended period map is not as elegant in the remaining cases, the point is that the codimension-one boundary behavior can be dealt with uniformly and efficiently using our calculus.

Our other main example, treated in \S\ref{S5}, is the VHS $\mathcal{H}\to\mathcal{S}$ on the moduli space of general configurations of $(2n+2)$ hyperplanes in $\PP^n$, arising from the middle (intersection) cohomology of a 2:1 cover $X\to \PP^n$ branched along these hyperplanes. These are singular Calabi-Yau $n$-folds admitting a crepant resolution, and have been studied in \cite{DK,GSZ,GSVZ,SXZ}.  By passing to a smooth complete intersection $2^{2n}$-cover of $X$ and applying the Cayley trick (cf.~\cite[\S4.5]{Ke}), we replace $X$ by a smooth hypersurface $Y\subset \PP(\co_{\PP^{2n+1}}(2)^{\oplus (n+1)})$ with automorphisms by a group of order $2^{2n}$.  In codimension-one in moduli, $Y$ acquires nodes, and a variant of Schoen's result in \cite{Sc} ensures that these produce nontrivial symplectic transvections for $\H$ when $n$ is odd.  This gives an easy proof that the geometric monodromy group of $\H$ is maximal (for all $n$), and the period map ``non-classical'', a fact first proved by \cite{GSVZ} for $n=3$ and by \cite{SXZ} in general.

\subsection*{Notation}
In this paper ``MHS'' stands for $\QQ$-mixed Hodge structure.  We shall make frequent use of the \emph{Deligne bigrading} on a MHS $V$ \cite{De2}.  This is (by definition) the unique decomposition $V_{\CC}=\oplus_{p,q\in \ZZ} V^{p,q}$ with the properties that $F^k V_{\CC}=\oplus_{\substack{p,q \\ p\geq k}}V^{p,q}$, $W_{\ell}V_{\CC}=\oplus_{\substack{p,q \\ p+q\leq \ell}}V^{p,q}$, and $\overline{V^{q,p}}\equiv V^{p,q}$ mod $\oplus_{\substack{a<p \\ b<q}}V^{a,b}$.  We shall make free use of standard multi-index notation (for $n$-tuples of variables or field-elements) to simplify formulas, viz. $\uz=(z_1,\ldots,z_n)$, $\CC[\uz]=\CC[z_1,\ldots,z_n]$, $\uz^{\um}=\prod_i z_i^{m_i}$, $\um\cdot \uw=\sum_i m_iw_i$, $|\um|=\sum_i m_i$, $\ue^{(i)}=$  $i^{\text{th}}$ standard basis vector, etc.

\begin{thx}
We thank P.~Gallardo and R.~Laza for valuable discussions related to this paper, and the referee for helpful expository suggestions.  This work was partially supported by Simons Collaboration Grant 634268 and NSF Grant DMS-2101482 (MK).  MK and GP would also like to thank the Isaac Newton Institute for Mathematical Sciences for support and hospitality during the programme ``$K$-theory, algebraic cycles, and motivic homotopy theory'' (supported by EPSRC Grant Number EP/R014604/1) when work on this paper was undertaken.
\end{thx}

%%%%%%%%%%%%%%%%%%%%%%%%%%%%%%%%%%%%%%
%%%%%%%%%%%%%%%%%%%%%%%%%%%%%%%%%%%%%%
\section{$G$-spectra and eigenspectra}\label{S1}

\subsection*{\S Morphisms and mixed spectra}
We begin in the general setting of a proper morphism
\begin{equation}\label{A1}
\xymatrix{\Y \ar [rr]^{\pi} \ar [rd]_{\f} && \X \ar [ld]^f \\ & \Delta }
\end{equation}
of complex analytic spaces over a disk, which we assume is the restriction to $\Delta$ of a proper morphism of quasi-projective varieties over an algebraic curve.  (In particular, at the level of fibers we have that $\pi_t\colon Y_t\to X_t$ is a proper algebraic morphism of quasi-projective varieties.) Let $\K^{\b}\in D^b\mathrm{MHM}(\X)$ and $\L^{\b}\in D^b\mathrm{MHM}(\Y)$ be given, with a morphism $\rho\colon \K^{\b}\to R\pi_*\L^{\b}$.  Writing $\imath\colon X_0\hookrightarrow \X$ for the inclusion, the vanishing cycle triangle
\begin{equation}\label{A2}
\imath^*\overset{\mathrm{sp}}{\longrightarrow}\psi_f \overset{\mathrm{can}}{\longrightarrow}\phi_f \overset{\delta}{\underset{[+1]}{\longrightarrow}}
\end{equation}
consists of functors from $D^b\mathrm{MHM}(\X)$ to $D^b\mathrm{MHM}(X_0)$, with natural transformations between them; moreover, monodromy $T=T_{\mathrm{ss}}e^N$ induces natural automorphisms of $\psi_f$ and $\phi_f$.  By proper base-change and faithfulness of $\mathrm{rat}\colon D^b\mathrm{MHM}(X_0)\to D^b_c(X_0)$, $R\pi_*\colon D^b\MHM(Y_0)\to D^b \MHM(X_0)$ intertwines the corresponding triangle (and monodromy actions) for $(\Y,\f)$.  Taking hypercohomology on $X_0$ yields:

\begin{prop}\label{A3}
We have the commutative diagram\small
\[\xymatrix@C=13pt{\to \HH^k(X_0,\imath^*\K^{\b}) \ar [r]^{\sp}	\ar [d]^{\rho} & \HH^k(X_0,\psi_f\K^{\b}) \ar [r]^{\can} \ar [d]^{\rho} & \HH^k(X_0,\phi_f\K^{\b}) \ar [r]^{\delta\mspace{25mu}} \ar [d]^{\rho} & \HH^{k+1}(X_0,\imath^*\K^{\b})\to \ar [d]^{\rho}\\ \to \HH^k(Y_0,\imath^*\L^{\b}) \ar [r]^{\sp} & \HH^k(Y_0,\psi_{\f}\L^{\b}) \ar [r]^{\can} & \HH^k(Y_0,\phi_{\f}\L^{\b}) \ar [r]^{\delta\mspace{25mu}} & \HH^{k+1}(Y_0,\imath^*\L^{\b})\to }
\]\normalsize
with rows the vanishing-cycle \textup{(}long-exact\textup{)} sequences, in which all arrows are morphisms of MHS. Moreover, the diagram intertwines the actions of $T_{\mathrm{ss}}$ \textup{(}by automorphisms of MHS\textup{)} and $N$ \textup{(}by nilpotent $(-1,-1)$-endomorphisms of MHS\textup{)}, which are trivial \textup{(}$\mathrm{Id}$ resp.~$0$\textup{)} on the end terms.
\end{prop}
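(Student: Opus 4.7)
The plan is to derive the diagram as the image under hypercohomology of a morphism of distinguished triangles in $D^b\MHM(X_0)$, and then to read off the monodromy compatibilities from the naturality of $T_{\mathrm{ss}}$ and $N$ on the nearby- and vanishing-cycles functors.

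First, applying the vanishing-cycle triangle \eqref{A2} to $\K^\b$ yields a distinguished triangle
\[
\imath^*\K^\b \to \psi_f\K^\b \to \phi_f\K^\b \overset{[+1]}{\to}
\]
in $D^b\MHM(X_0)$, and similarly for $\L^\b$ on $Y_0$. The key input (already noted in the paragraph preceding the statement) is proper base change in Saito's category: since $\pi$ is proper and $\f = f\circ\pi$, the canonical morphisms $R\pi_*\imath^* \to \imath^* R\pi_*$, $R\pi_*\psi_{\f} \to \psi_f R\pi_*$, and $R\pi_*\phi_{\f}\to \phi_f R\pi_*$ are isomorphisms in $D^b\MHM$, as one checks by passage to $D^b_c(X_0)$ via the faithfulness of $\mathrm{rat}$. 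Applying $R\pi_*$ to the triangle for $\L^\b$ and using these isomorphisms produces a distinguished triangle for $R\pi_*\L^\b$ in $D^b\MHM(X_0)$; naturality of $\sp$, $\can$, and $\delta$ in the argument then promotes $\rho\colon \K^\b \to R\pi_*\L^\b$ to a morphism of distinguished triangles.

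Taking $\HH^k(X_0,-)$ and using $\HH^k(X_0, R\pi_*\F^\b) = \HH^k(Y_0,\F^\b)$ at the level of MHS delivers the diagram, with rows the vanishing-cycle long exact sequences. Because every arrow arises as $\HH^k$ of a morphism in $D^b\MHM(X_0)$, it is automatically a morphism of MHS. For the monodromy: $T_{\mathrm{ss}}$ and $N$ are, in Saito's construction, natural (endo)automorphisms of $\psi_f$ and $\phi_f$ — with $N$ of bidegree $(-1,-1)$ — and the base-change isomorphisms intertwine them with their $\f$-counterparts, yielding compatibility with both the horizontal arrows and the vertical arrows induced by $\rho$. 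Triviality on the end terms is immediate: $T$ and $N$ are by definition only attached to $\psi,\phi$, so on $\HH^k(X_0,\imath^*\K^\b)$ and $\HH^k(Y_0,\imath^*\L^\b)$ they act as $\mathrm{Id}$ and $0$, respectively.

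The proposition is thus a formal consequence of (i) the vanishing-cycle triangle being a functorial distinguished triangle in $D^b\MHM$ equipped with its monodromy natural transformations, (ii) Saito's proper-base-change compatibility for $R\pi_*$ with $\imath^*$, $\psi_f$, and $\phi_f$, and (iii) the exactness of $\HH^k$ on short exact sequences of MHM. There is no serious technical obstacle — the main work is in correctly organizing the ladder of long exact sequences and tracking the monodromy naturality through the base-change isomorphisms; everything else is an invocation of standard results from the theory of mixed Hodge modules.
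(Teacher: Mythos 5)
Your proposal is correct and follows essentially the same route as the paper: proper base-change compatibility of $R\pi_*$ with $\imath^*$, $\psi$, and $\phi$ (verified via faithfulness of $\mathrm{rat}$), naturality of the vanishing-cycle triangle and of $T_{\mathrm{ss}}$, $N$ on $\psi$ and $\phi$, followed by taking hypercohomology. The paper states this more tersely in the paragraph preceding the proposition, but the logical content is identical to what you have written.
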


\begin{rem}\label{A4}
If $f,\f$ are themselves projective (hence proper), and $\K^{\b},\L^{\b}$ semisimple with respect to the perverse $t$-structure (e.g.~$\K^{\b}=\mathcal{IC}_{\X}^{\b}$, $\L^{\b}=\mathcal{IC}_{\Y}^{\b}$), then the Decomposition Theorem applies, yielding Clemens-Schmid sequences (cf.~\cite[\S5]{KL1}) which are then automatically compatible under $\rho$. The main consequence is that the local invariant cycle theorem holds, i.e.~$\sp$ surjects onto the $T$-invariants.
\end{rem}

Next, assume $\X,\Y,\{X_t\}_{t\neq 0}$, and $\{Y_t\}_{t\neq 0}$ are smooth, and take $\L^{\b}=\QQ_{\Y}$ and $\K^{\b}=\QQ_{\X}$; then the diagram in Prop.~\ref{A3} becomes 
\begin{equation}\label{A5}
\xymatrix{\to H^k(X_0) \ar [r]^{\sp} \ar [d]^{\pi^*} & H^k_{\lim}(X_t) \ar [r]^{\can} \ar [d]^{\pi^*} & H^k_{\van}(X_t) \ar [r]^{\delta\mspace{25mu}} \ar [d]^{\pi^*} & H^{k+1}(X_0) \to \ar [d]^{\pi^*} \\ \to H^k(Y_0) \ar [r]^{\sp}  & H^k_{\lim}(Y_t) \ar [r]^{\can}  & H^k_{\van}(Y_t) \ar [r]^{\delta\mspace{25mu}}  & H^{k+1}(Y_0) \to .}
\end{equation}
Now if $n=\dim X_0$ and $\Sigma:=\mathrm{sing}(X_0)$ is finite, then $H^k_{van}(X_t)=\{0\}$ for $k\neq n$ and, defining $V_x:=H^0\imath_x^*\phi_f\QQ_{\X}[n]$, 
\begin{equation}\label{A6}
H^n_{\van}(X_t)\cong \textstyle \bigoplus_{x\in \Sigma} V_x
\end{equation}
as MHS.  We have of course $\pi^{-1}(\Sigma)\subset \widetilde{\Sigma}:=\mathrm{sing}(Y_0)$, and if $\dim Y_0 = n$ and $|\widetilde{\Sigma}|<\infty$ then, writing $\widetilde{V}_y:=H^0\imath_y^*\phi_{\f}\QQ_{\Y}[n]$ ($y\in \widetilde{\Sigma}$), $\pi^*$ restricts to morphisms
\begin{equation}\label{A7}
[\pi^*]_x\colon V_x\to \textstyle\bigoplus_{y\in \pi^{-1}(x)}\widetilde{V}_y 
\end{equation}
of $T$-MHS --- i.e.~morphisms of MHS intertwining $T$ (hence $T_{\mathrm{ss}}$ and $N$).  These are local invariants.

Recall that $T_{\mathrm{ss}}$ acts through finite cyclic groups on each $V_x$ (and $\widetilde{V}_y$), and let $\kappa$ be the $lcm$ of their orders.  Write $\zeta_{\kappa}:=e^{\frac{2\pi\ay}{\kappa}}$ and $V_{x,\be(\frac{a}{\kappa})}^{p,q}$ for the $\be(\tfrac{a}{\kappa}):=e^{2\pi\ay\frac{a}{\kappa}}=\zeta_{\kappa}^a$-eigenspace of $T_{\mathrm{ss}}$ in $V_x^{p,q}\subset V_{x,\CC}$.  The explicit choice of $\zeta_{\kappa}\in \CC$ is needed to make the following

\begin{defn}\label{A8}
The \emph{mixed spectrum} $\sigma_{f,x}$ of the isolated singularity $x\in \Sigma$ is the element $\sum_{\alpha,w}m_{\alpha,w}^{f,x}(\alpha,w)$ of the free abelian group $\ZZ\langle \QQ\times \ZZ\rangle$, where $m_{\alpha,w}^{f,x}=\dim(V_{x,\be(\alpha)}^{\lfloor\alpha\rfloor,w-\lfloor\alpha\rfloor})$.\footnote{Here $\lfloor\cdot\rfloor$ is the greatest integer (floor) function; note also that $\be(\alpha)$ is equivalent to taking the fractional part $\{\alpha\}:=\alpha-\lfloor\alpha\rfloor$ of $\alpha$.}
\end{defn}
\noindent Evidently \eqref{A7} must be compatible with the decompositions recorded by the mixed spectra.

\subsection*{\S Automorphisms and eigenspectra}

Now let $\mathcal{G}\leq \mathrm{Aut}(\X/\Delta)$, with $\X$ and $\{X_t\}_{t\neq 0}$ smooth and $|\Sigma|<\infty$.  Applying the foregoing results with $\Y=\X$, $f=f'$, and $\pi:=g\in \mathcal{G}$, together with \cite[Prop.~5.5(i)]{KL1}, yields

\begin{cor}\label{A9}
The vanishing-cycle sequence
\begin{equation}\label{A10}
0\to H^n(X_0)	\overset{\sp}{\to}H^n_{\lim}(X_t)\overset{\can}{\to}\textstyle\bigoplus_{x\in \Sigma}V_x \overset{\delta}{\to}H^{n+1}_{\mathrm{ph}}(X_0)\to 0
\end{equation}
is an exact sequence of $\mathcal{G}\times\mu_{\kappa}$-MHS,\footnote{Again, this means that the action of $\mathcal{G}$ and $T_{\text{ss}}$ on the MHSs (as automorphisms of MHS) commute with each other and with $\sp$, $\can$, and $\delta$.} where the $\langle T_{\text{ss}}\rangle\cong \mu_{\kappa}$-action on the end terms is trivial.  If $\X/\Delta$ is proper, then $H^{n+1}_{\mathrm{ph}}(X_0):=\ker(\sp)\subseteq H^{n+1}(X_0)$ is pure of weight $n+1$.
\end{cor}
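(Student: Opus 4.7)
The plan is to specialize Proposition~\ref{A3} to the situation $\Y=\X$, $\f=f$, $\L^{\b}=\K^{\b}=\QQ_{\X}$, and $\pi=g$, carried out once for each $g\in\mathcal{G}$, and to reassemble the outputs.

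First I would derive \eqref{A10} from \eqref{A5}. Since $\X$ and $\{X_t\}_{t\neq 0}$ are smooth of dimension $n$ and $|\Sigma|<\infty$, the identification \eqref{A6} gives $H^k_{\van}(X_t)=0$ for $k\neq n$, so $\sp\colon H^k(X_0)\to H^k_{\lim}(X_t)$ is an isomorphism of MHS for $k\notin\{n,n+1\}$, is injective for $k=n$, and has degree-$(n+1)$ kernel identified with the degree-$n$ cokernel of $\can$. Naming this kernel $H^{n+1}_{\mathrm{ph}}(X_0)$ produces the asserted four-term exact sequence of MHS. That $T_{\mathrm{ss}}$ acts trivially on the end terms is inherited from the corresponding assertion in Proposition~\ref{A3} (the constant sheaf $\QQ_{X_0}$ has no monodromy).

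Next, for each $g\in\mathcal{G}\leq\mathrm{Aut}(\X/\Delta)$, Proposition~\ref{A3} with $\pi=g$ produces vertical arrows of MHS on \eqref{A10} commuting with $\sp$, $\can$, $\delta$ and intertwining $T_{\mathrm{ss}}$. Collecting these over $g\in\mathcal{G}$ endows every term with compatible, commuting actions of $\mathcal{G}$ and $\mu_{\kappa}$, which is precisely the $\mathcal{G}\times\mu_{\kappa}$-MHS structure required; the formal commutativity of the two actions is the content of \cite[Prop.~5.5(i)]{KL1}.

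Finally, assume $\X/\Delta$ is proper, so that by Remark~\ref{A4} the Clemens--Schmid sequence for $f$ is available in the category of MHS. Splicing it with the vanishing-cycle sequence identifies $H^{n+1}_{\mathrm{ph}}(X_0)=\ker(\sp)$ with a Tate-twisted subquotient of limit cohomology built from $N$ and its image, which by the monodromy/weight theorem is pure of weight $n+1$. The main obstacle is not the purity itself but keeping the weight shifts straight across Clemens--Schmid, which is tedious but standard, and is already worked through in \cite[\S5]{KL1}; I would simply cite that rather than redo the computation.
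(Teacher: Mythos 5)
Your proposal follows the paper's own (one-line) argument: derive \eqref{A10} from \eqref{A5}--\eqref{A6}, use Prop.~\ref{A3} with $\pi=g$ for each $g\in\mathcal{G}$ to get the compatible $\mathcal{G}$- and $T_{\mathrm{ss}}$-actions, and appeal to Clemens--Schmid in \cite{KL1} for the purity of $H^{n+1}_{\mathrm{ph}}$. One small correction: the commutativity of $g^*$ with $T_{\mathrm{ss}}$ is already part of the ``intertwines'' conclusion of Prop.~\ref{A3}, not what the citation to \cite[Prop.~5.5(i)]{KL1} supplies (that citation is carrying the Clemens--Schmid/purity content); and your description of $H^{n+1}_{\mathrm{ph}}$ as a ``Tate-twisted subquotient of limit cohomology'' is loose --- it is a sub-MHS of $H^{n+1}(X_0)$, identified via Clemens--Schmid with the image of the (twisted) map from $H_{n+1}(X_0)$, which has weights $\geq n+1$ while $H^{n+1}(X_0)$ has weights $\leq n+1$, giving purity --- but since you defer to \cite[\S5]{KL1} for the details this does not affect correctness.
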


The decomposition of terms in \eqref{A10} into irreps for $\mathcal{G}\times\mu_{\kappa}$ only becomes useful if we understand the action on the vanishing cohomology $\bigoplus_{x\in\Sigma}V_x$ for a given collection of singularities.  In particular, if $gx=x$ then we need to further refine the spectrum under the resulting automorphism $g^*\colon V_x\to V_x$ of $T$-MHS.

\begin{defn}\label{A11}
Write $G\leq \mathrm{stab}(x)\leq \mathcal{G}$, and $\mathcal{R}_G$ for the set of complex irreducible representations of $G$.  The \emph{$G$-spectrum} $\sigma_{f,x}^G$ of $x$ is the element $\sum_{(\alpha,w,U)}m^{f,x,G}_{\alpha,w,U}(\alpha,w,U)$ of the free abelian group $\ZZ\langle\QQ\times \ZZ\times \mathcal{R}_G\rangle$, where (for each $(\alpha,w)$) $V_{x,\be(\alpha)}^{\lfloor\alpha\rfloor,w-\lfloor\alpha,\rfloor}\cong \bigoplus_{U\in \mathcal{R}_G}U^{\oplus m^{f,x,G}_{\alpha,w,U}}$ as $G$-representations.
\end{defn}

In the special case where $G=\langle g\rangle\cong \mu_{\ell}$ is cyclic, the $\CC$-irreps are characters indexed by the power $\zeta_{\ell}^c=e^{2\pi\ay\frac{c}{\ell}}$ of $\zeta_{\ell}$ to which $g$ is sent.

\begin{defn}\label{A12}
The \emph{eigenspectrum} of an isolated singularity $x$ with automorphism $g$ is the element $$\sigma^g_{f,x}=\sum_{(\alpha,w,\gamma)}m^{f,x,g}_{\alpha,w,\gamma}(\alpha,w,\gamma)\in \ZZ\langle \QQ\times\ZZ\times\QQ/\ZZ\rangle,$$ where $m^{f,x,g}_{\alpha,w,\gamma}$ is the dimension of the eigenspace $(V^{\lfloor\alpha\rfloor,w-\lfloor\alpha,\rfloor}_{x,\be(\alpha)})^{\be(\gamma)}\subseteq V_{x,\be(\alpha)}^{\lfloor\alpha\rfloor,w-\lfloor\alpha\rfloor}$ for $g^*$ with eigenvalue $\be(\gamma)=e^{2\pi\ay\gamma}$.
\end{defn}

\begin{rem}\label{A13}
For $\X/\Delta$ proper (with hypotheses as in Cor.~\ref{A9}), $H^n(X_t)$ is a VHS on $\Delta^*$ whose automorphism group contains $\mathcal{G}$. For any field extension $K/\QQ$, this decomposes as $K$-VHS into a direct sum of $\mathcal{G}$-isotypical components, corresponding to $K$-irreps of $\mathcal{G}$.  The $\mathcal{G}$-action on and decomposition of $H^n_{\lim}(X_t)$ obtained by taking limits are the same as those arising from the $\mathcal{G}$-MHS structure on $H^n_{\lim}(X_t)$ in Cor.~\ref{A9}.
\end{rem}

We now turn to the explicit computation of these eigenspectra in the simplest case.

%%%%%%%%%%%%%%%%%%%%%%%%%%%%%%%
%%%%%%%%%%%%%%%%%%%%%%%%%%%%%%%
\section{Quasihomogeneous singularities with automorphism}\label{S2}

Let $F\in \CC[z_1,\ldots,z_{n+1}]$ (with $n>0$) be a \emph{quasi-homogeneous polynomial} with an isolated singularity at the origin $\uo$.  That is, choosing a weight vector $\uw=(w_1,\ldots,w_{n+1})\in \QQ_{>0}^{n+1}$ and setting $$\mathfrak{M}_{\uw}:=\{\um\in \ZZ_{\geq 0}^{n+1}\mid \um\cdot \uw=1\},$$ we have 
\begin{equation}\label{B1}
F=\sum_{\um\in \mathfrak{M}_{\uw}}a_{\um}\uz^{\um}
\end{equation}
for some $a_{\um}\in \CC$.  We recall that the \emph{degree} $\kappa_F$ of $F$ is the least integer such that $\kappa_F w_i\in \NN$ for $i=1,\ldots,n+1$; define $w_i:=\kappa_F w_i$ and set $\uk:=(\kappa_1,\ldots,\kappa_{n+1})$.

Next recall the setting of Defn.~\ref{A8}, where $f\colon \X\to \Delta$ is a holomorphic map with quasi-projective fibers and smooth total space, with $X_t$ smooth for $t\neq 0$ and $\mathrm{sing}(X_0)=:\Sigma$ finite.  A singularity $x\in \Sigma\subset X_0$ is \emph{quasi-homogeneous} if $f$ can be locally analytically identified with \eqref{B1} for some $\uw$.  In that case, $V_x$ and $\sigma_{f,x}$ identify with the vanishing cohomology
\begin{equation}\label{B2}
V_F:=H^0\imath^*_{\uo}\phi_F \QQ_{\CC^{n+1}}
\end{equation}
of $F\colon \CC^{n+1}\to \CC$ at $\uo$, and its mixed spectrum $\sigma_F$.  These were first computed by Steenbrink in \cite{St}, and we briefly review the treatment from \cite[\S2]{KL2} before passing to eigenspectra.

Writing $J_F:=(\tfrac{\d F}{\d z_1},\ldots ,\tfrac{\d F}{\d z_{n+1}})\subseteq \CC[\uz]$ for the Jacobian ideal, let $\mathcal{B}\subset \ZZ_{\geq 0}^{n+1}$ be chosen so that the monomials $\{\uz^{\underline{\beta}}\}_{\underline{\beta}\in \mathcal{B}}$ provide a basis of $\CC[\uz]/J_F$.  Write $\mu_F:=|\mathcal{B}|$ for the \emph{Milnor number} of $F$, and $\ell(\underline{\beta}):=\tfrac{1}{\kappa_{F}}\sum_{i=1}^{n+1}\kappa_i(\beta_i+1)=\sum_{i=1}^{n+1}w_i(\beta_i+1)$.

\begin{prop}\label{B3}
We have $\mu_F=\dim V_F$ for the Milnor number and
$$\sigma_F=\textstyle\sum_{\underline{\beta}\in \mathcal{B}}(\alpha(\underline{\beta}),w(\underline{\beta}))\in \ZZ\langle \QQ\times \ZZ\rangle$$
for the mixed spectrum, where $\alpha(\underline{\beta}):=n+1-\ell(\underline{\beta})$ and $w(\underline{\beta}):=n$ \textup{[}resp. $n+1$\textup{]} if $\alpha(\underline{\beta})\notin \ZZ$ \textup{[}resp. $\in \ZZ$\textup{]}.
\end{prop}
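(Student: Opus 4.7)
The plan is to follow the classical Steenbrink calculation \cite{St} in the streamlined form recorded in \cite[\S 2]{KL2}. First I would identify $V_F$ with the reduced middle cohomology of a local Milnor fibre $M = F^{-1}(\epsilon)\cap B_\delta$ of $F$ at $\uo$. Because $F$ is quasi-homogeneous, $M$ is diffeomorphic to the affine variety $\{F=1\}\subset \CC^{n+1}$, which by Milnor's theorem for isolated hypersurface singularities has the homotopy type of a bouquet of $\mu_F$ copies of $S^n$; since $\mu_F = \dim_{\CC}\CC[\uz]/J_F = |\mathcal{B}|$ by definition, this gives $\dim V_F = \mu_F$.

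The Brieskorn/Gauss--Manin description realises $V_F$ as a quotient of $\Omega^{n+1}_{\CC^{n+1},\uo}$, and the map $\uz^{\underline{\beta}} \mapsto [\uz^{\underline{\beta}}\, d\uz / dF]$ induces an isomorphism $\CC[\uz]/J_F \xrightarrow{\sim} V_F$, so the monomial basis indexed by $\mathcal{B}$ descends to a basis of $V_F$. The semisimple monodromy $T_{\mathrm{ss}}$ is induced by the quasi-homogeneous $\CC^*$-action $z_i \mapsto \lambda^{w_i} z_i$, and a direct weight computation shows it multiplies $[\uz^{\underline{\beta}}\, d\uz/dF]$ by an appropriate power of a primitive $\kappa_F$th root of unity. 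With the conventions of Defn.~\ref{A8} (fixing $\zeta_\kappa$ and the orientation of monodromy) this eigenvalue identifies with $\be(\alpha(\underline{\beta}))$ for $\alpha(\underline{\beta}) = n+1 - \ell(\underline{\beta}) \in (0,n+1)$, pinning down the monodromy eigenvalue attached to each basis vector.

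To pin down the Hodge bidegree and weight I would invoke two classical results. First, Brieskorn's identification of the Hodge filtration with (a shift of) the pole-order filtration on the Gauss--Manin lattice asserts that $[\uz^{\underline{\beta}}\, d\uz/dF] \in F^p V_F \setminus F^{p+1} V_F$ iff $p = \lfloor \alpha(\underline{\beta})\rfloor$. Second, since the monodromy of a quasi-homogeneous singularity is semisimple, $N=0$, and Steenbrink's construction places the weight filtration so that the $T_{\mathrm{ss}}$-eigenspaces with eigenvalue $\ne 1$ are pure of weight $n$, while the eigenvalue-$1$ part is pure of weight $n+1$. These cases correspond exactly to $\alpha(\underline{\beta})\notin \ZZ$ and $\alpha(\underline{\beta})\in \ZZ$, yielding the stated values of $w(\underline{\beta})$. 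Reading off $(\alpha(\underline{\beta}), w(\underline{\beta}))$ for each monomial in $\mathcal{B}$ then gives the stated formula for $\sigma_F$.

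The substantive inputs (Brieskorn's pole-order/Hodge identification and Steenbrink's determination of $W_\bullet$) are classical and I would cite them rather than re-derive. The only real obstacle is bookkeeping: one must match the choice of $\zeta_\kappa$, the orientation of the geometric monodromy, and the range in which $\alpha$ is placed, so that the normalisation $\alpha(\underline{\beta}) = n+1 - \ell(\underline{\beta})$ emerges in the form compatible with Defn.~\ref{A8} (rather than the equivalent but variant normalisation $\ell(\underline{\beta})-1$ used in some references).
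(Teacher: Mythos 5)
Your argument is correct, but it follows a genuinely different path from the one the paper takes (which is the weighted blow-up construction of \cite[\S 2]{KL2}). You work with the Milnor fibre and the Brieskorn lattice, identifying $V_F$ with $\tilde H^n$ of $\{F=1\}$, taking the monomial basis $\{[\uz^{\ubb}\,d\uz/dF]\}$ in the Gauss--Manin description, reading off the $T_{\mathrm{ss}}$-eigenvalue from the quasi-homogeneous $\CC^*$-action, and then citing the Brieskorn--Scherk--Steenbrink identification of $F^\bullet$ with the pole-order filtration together with Steenbrink's placement of $W_\bullet$ (eigenvalue $\ne 1$ pure of weight $n$, eigenvalue $1$ pure of weight $n+1$, both because $N=0$ in the quasi-homogeneous case). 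The paper instead performs a base change $t\mapsto t^{\kappa_F}$ and a weighted blow-up at $\uo$, identifies $V_F$ with $H^n(\E\setminus E)$ for the exceptional divisor $\E\subset\WP[1{:}\uk]$ via the diagram \eqref{A5}, and then produces an explicit residue basis $\omega_{\ubb}=\Res_{\E\sm E}[\Omega_{\ubb}]$ whose $(p,q)$-type and $T_{\mathrm{ss}}$-eigenvalue are read off from the rational form \eqref{B5}. Both routes are classical and the substantive inputs in each are cited rather than rederived, so the level of rigor is comparable; the practical difference is that the paper's explicit projective residue representatives $\omega_{\ubb}$ are exactly what it reuses in the proof of Corollary~\ref{B7} to read off the $g^*$-eigenvalue, whereas in your version you would compute $g^*$ on $[\uz^{\ubb}\,d\uz/dF]$ instead --- which also works, so nothing is lost, but it is worth noting that the two setups are tailored to that next step in slightly different ways.
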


\begin{proof}[Sketch]
Perform a base-change followed by weighted blow-up at $\uo$
\begin{equation}\label{B4}
\xymatrix@R=0.8pc{\CC^{n+1} \ar [d]_F & \mathfrak{X} \ar [d]_{\hat{F}} \ar [l] & \mathfrak{Y} \ar [l]_{\mathrm{Bl}_{\underline{\kappa}}} \ar [ld]^{\widetilde{F}}	\\
\CC & \Delta \ar [l] \\ t^{\kappa_F} & t \ar @{|->} [l]}
\end{equation}
with exceptional divisor $\E=\{T^{\kappa_F}=F(\underline{Z})\}\subset \mathbb{WP}[1{:}\underline{\kappa}]=:\mathbf{P}$ (in weighted homogeneous coordinates $T,Z_1,\ldots,Z_{n+1}$).  The singular fiber $\mathbb{Y}_0:=\widetilde{F}^{-1}(0)$ is the union of $\E$ and the proper transform $\widetilde{\mathbb{X}}_0$ of $\mathbb{X}_0:=F^{-1}(0)=\hat{F}^{-1}(0)$, meeting in 
$$E:=\E\cap \widetilde{\mathbb{X}}_0=\{F(\underline{Z})=0\}\subset \mathbf{H}:=\{T=0\}\,(\cong \mathbb{WP}[\underline{\kappa}])\subset \mathbf{P}.$$

The claim is then that $V_F\cong H^n(\E\setminus E)$, which can be checked using \eqref{A5} with $\pi=\mathrm{Bl}_{\underline{\kappa}}$.  Since $E$ [resp. $\uo$] is a deformation retract of $\mathbb{Y}_0$ [resp. $\mathbb{X}_0$], while $\mathbb{Y}_t=\mathbb{X}_t$ for $t\neq 0$, and $\phi_{\widetilde{F}}\QQ_{\mathcal{Y}}\simeq \imath^E_*\QQ_E(-1)[-1]$ (cf.~\cite[6.3 and 8.3-4]{KL1}), the diagram becomes
$$\xymatrix@C=1pc{&0\ar[r] & H^n_{\lim}(\mathbb{X}_t)\ar [r]^{\cong} \ar @{=} [d] & V_F \ar [d]^{\mathrm{Bl}^*} \ar [r] & 0 \\ H^{n-2}(E)(-1)\ar [r] & H^n(\E) \ar [r] & H^n_{\lim}(\mathbb{Y}_t) \ar [r] & H^{n-1}(E)(-1) \ar [r] & H^{n+1}(\E)}$$
whence the result.

Next, one constructs a basis of $H^n(\E\sm E)$ from $\mathcal{B}$, using residue theory.  Writing (with $T:=Z_0$) 
$$\Omega_{\mathbf{P}}=\sum_{j=0}^{n+1} (-1)^j Z_j\, dZ_0\wedge\cdots \wedge \widehat{dZ_j}\wedge \cdots \wedge dZ_{n+1},$$
for each $\underline{\beta}\in \mathcal{B}$ we set (with $\underline{Z}^{\underline{\beta}}=Z_1^{\beta_1}\cdots Z_{n+1}^{\beta_{n+1}}$)
\begin{equation}\label{B5}
\Omega_{\underline{\beta}}:=\frac{T^{\kappa_F}\underline{Z}^{\underline{\beta}}\Omega_{\mathbf{P}}}{T(F(\underline{Z})-T^{\kappa_F})^{\lceil \ell(\underline{\beta})\rceil}}\in \Omega^{n+1}(\mathbf{P}\sm\E\cap\mathbf{H})
\end{equation}
and $\omega_{\underline{\beta}}:=\mathrm{Res}_{\E\sm E}([\Omega_{\underline{\beta}}])\in H^n(\E\sm E)$.  We refer to \cite[Thm.~2.2]{KL2} for the proof that this has $(p,q)$-type $(\lfloor\alpha(\underline{\beta})\rfloor,\lfloor\ell(\underline{\beta})\rfloor)$, and \cite[Thm.~1]{St} for the assertion that the $\{\omega_{\underline{\beta}}\}$ give a basis.  Note that $\lfloor\alpha(\underline{\beta})\rfloor+\lfloor\ell(\underline{\beta})\rfloor=w(\underline{\beta})$.

Finally, the action of $T_{\text{ss}}$ is computed by $T\mapsto \zeta_{\kappa_F}T$, or equivalently (in weighted projective coordinates) by $Z_i\mapsto \zeta_{\kappa_F}^{-\kappa_i}Z_i=e^{-2\pi\ay w_i}Z_i$.  Clearly the effect of this on \eqref{B5} is to multiply it by $e^{2\pi\ay\sum w_i(\beta_i+1)}=e^{2\pi\ay\alpha(\underline{\beta})}$, as desired.
\end{proof}

Now given a finite group $G\leq \mathrm{Aut}(\X/\Delta)$ fixing $x\in \Sigma$, we can always choose local holomorphic coordinates on which the action is linear \cite{Ca}.  So for a given $g\in G$, we can choose coordinates to make the action diagonal, through roots of unity. Accordingly, we shall compute the eigenspectrum in the case where $g\in \mathrm{Aut}(\CC^{n+1},\uo)$ is given by
\begin{equation}\label{B6}
g(z_1,\ldots,z_{n+1}):=(\zeta_{\ell}^{c_1}z_1,\ldots,\zeta_{\ell}^{c_{n+1}}z_{n+1})	
\end{equation}
and $F\in \CC[\uz]^{\langle g\rangle}$ is a $g$-invariant quasi-homogeneous polynomial.  In fact, taking $\mathcal{B}\subset \ZZ^{n+1}_{\geq 0}$ as above, we have the

\begin{cor}\label{B7}
The eigenspectrum $\sigma_F^g$ is given by
$$\sum_{\underline{\beta}\in \mathcal{B}}(\alpha(\underline{\beta}),w(\underline{\beta}),\gamma(\underline{\beta}))\in \ZZ\langle \QQ\times\ZZ\times\QQ/\ZZ\rangle,$$	
where $\gamma(\underline{\beta}):=\frac{1}{\ell}\sum_{i=1}^{n+1}c_i(\beta_i+1)$.
\end{cor}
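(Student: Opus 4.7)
The plan is to identify the $g^*$-eigenvalue of each basis vector $\omega_{\underline{\beta}}$ of $V_F$ constructed in the proof of Proposition \ref{B3}. Since that basis already diagonalizes both the Deligne bigrading and the $T_{\mathrm{ss}}$-action, and $g^*$ commutes with $T_{\mathrm{ss}}$ and preserves the bigrading (being an automorphism of $T$-MHS), verifying that $g^*\omega_{\underline{\beta}} = \be(\gamma(\underline{\beta}))\,\omega_{\underline{\beta}}$ with $\gamma(\underline{\beta})=\tfrac{1}{\ell}\sum_i c_i(\beta_i+1)$ will suffice: one then simply counts how many $\underline{\beta}\in\mathcal{B}$ have each $(\alpha,w,\gamma)$-signature to recover $\sigma_F^g$.

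First I would lift $g$ to the weighted blow-up $\mathfrak{Y}=\mathrm{Bl}_{\underline{\kappa}}\mathfrak{X}$ of \eqref{B4}. Since the blow-up ideal and the $\kappa_F$-th-root base-change are $g$-invariant, the lift is canonical, and in the weighted projective coordinates $[T{:}Z_1{:}\cdots{:}Z_{n+1}]$ on $\mathbf{P}=\mathbb{WP}[1{:}\underline{\kappa}]$ it reads $T\mapsto T$, $Z_i\mapsto \zeta_\ell^{c_i}Z_i$. Because $F\in\CC[\uz]^{\langle g\rangle}$, the defining equation $T^{\kappa_F}=F(\underline{Z})$ of $\E$ is preserved, as are $\mathbf{H}=\{T=0\}$ and $E=\E\cap\mathbf{H}$. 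Hence $g$ acts on $H^n(\E\setminus E)\cong V_F$, and the Poincar\'e residue along $\E$ used to build the $\omega_{\underline{\beta}}$ is $g$-equivariant.

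Then I would read off the weight of $\Omega_{\underline{\beta}}$ from \eqref{B5} under this action. The factors $T^{\kappa_F}$ and $F(\underline{Z})-T^{\kappa_F}$ are of weight $0$, the monomial $\underline{Z}^{\underline{\beta}}$ contributes weight $\sum_i c_i\beta_i$, and each summand of $\Omega_{\mathbf{P}}$ has the uniform weight $\sum_i c_i$ (the $j=0$ term pairs the fixed $T$ with $dZ_1\wedge\cdots\wedge dZ_{n+1}$, while for $j\geq 1$ the factor $Z_j$ exactly replaces the weight lost by omitting $dZ_j$). Summing gives $g^*\Omega_{\underline{\beta}} = \zeta_\ell^{\sum_i c_i(\beta_i+1)}\Omega_{\underline{\beta}}$, which descends through the residue to yield the claimed eigenvalue.

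I expect no substantive obstacle: the basis in Proposition \ref{B3} was constructed equivariantly enough that the corollary reduces to weight bookkeeping. The one step requiring care is verifying that the lifted $g$-action is compatible with the Poincar\'e residue, i.e.~that the complex of sheaves computing $\mathrm{Res}_{\E\setminus E}$ carries a $g$-action intertwining the residue morphism; but this is automatic from the $g$-stability of $\E$, $\mathbf{H}$, and $E$ established in the second step.
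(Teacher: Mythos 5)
Your proposal is correct and takes essentially the same route as the paper: read off the $g^*$-eigenvalue of each residue class $\omega_{\underline{\beta}}$ from the monomial form $\underline{Z}^{\underline{\beta}}\Omega_{\mathbf{P}}$ in \eqref{B5}, using $g$-invariance of $F$ and the uniform $g$-weight $\sum_i c_i$ of the summands of $\Omega_{\mathbf{P}}$. The paper states this in one line; you simply fill in the routine verification that the lifted $g$-action on the weighted blow-up stabilizes $\E$, $\mathbf{H}$, $E$ and commutes with the Poincar\'e residue, which is implicit in the paper's argument.
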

\begin{proof}
We only need to compute the action of $g^*$ on $\omega_{\underline{\beta}}$, which is to say the effect of $Z_i\mapsto \zeta_{\ell}^{c_i}Z_i$ on $\underline{Z}^{\underline{\beta}}\Omega_{\underline{\beta}}$.  Clearly this is just multiplication by $\zeta_{\ell}^{\sum c_i(\beta_i+1)}=e^{2\pi\ay\gamma(\underline{\beta})}$.	
\end{proof}

\begin{example}\label{B8}
For a Brieskorn-Pham singularity $F=\sum_{i=1}^{n+1}z_i^{\lambda_i}$ ($\lambda_i=\tfrac{1}{w_i}=\tfrac{\kappa_F}{\kappa_i}$), we have $\mathcal{B}=\times_{i=1}^{n+1}\{\ZZ\cap[0,d_i-2]\}$.  Hence, writing $\Gamma_m=\sum_{j=1}^{m-1}[\tfrac{j}{m}]$ in the group ring $\ZZ[\QQ]$ (with product $*$), we have $\sum_{\underline{\beta}\in \mathcal{B}}[\alpha(\underline{\beta})]=\Gamma_{\lambda_1}*\cdots*\Gamma_{\lambda_{n+1}}$. This extends to $\sum_{\underline{\beta}\in \mathcal{B}}[(\alpha(\ubb),\gamma(\ubb))]=\widetilde{\Gamma}_{\lambda_1}(\tfrac{c_1}{\ell})*\cdots*\widetilde{\Gamma}_{\lambda_{n+1}}(\tfrac{c_{n+1}}{\ell})$ in the group ring $\ZZ[\QQ\times(\QQ/\ZZ)]$ if we write $\widetilde{\Gamma}_m(\tfrac{c}{\ell})=\sum_{j=1}^{m-1}[(\tfrac{m-j}{m},\tfrac{jc}{\ell})]$.\end{example}

\begin{example}\label{B8a}
As a \emph{specific} example, consider $F=z_1^2+z_2^2+z_3^{m+1}+z_4^3$, with $g(z_1,z_2,z_3,z_4):=(z_1,z_2,z_3,\zeta_3 z_4)$. Applying \ref{B8} to compute the eigenspectrum gives $\sum_{j=1}^m[(\tfrac{5}{3}+\tfrac{j}{m+1},\tfrac{1}{3})]+\sum_{j=1}^m[(\tfrac{4}{3}+\tfrac{j}{m+1},\tfrac{2}{3})]$. 

We can interpret this scenario as a local snapshot of a 3:1 cover of $\PP^3$ branched over a cubic surface acquiring an $A_m$ singularity.  So the $\zeta_3$-eigenspace of the $(1,2)$-part of vanishing cohomology has rank equal to the number of $j$'s for which $\tfrac{5}{3}+\tfrac{j}{m+1}<2$. Since the $\zeta_3$-eigenspace of the general fiber (= cubic 3-fold) has Hodge numbers $h^{1,2}=1$ and $h^{2,1}=4$, from $\tfrac{5}{3}+\tfrac{2}{7}<2$ we see that $m$ \emph{cannot} be $\geq 6$.  This bound is sharp, since $A_5$ can occur on a cubic surface in the form $z_1^3+z_2^3-z_2z_3^2$ (see for example \cite{Sak}).  

Applying the vanishing-cycle analysis directly on a cubic surface, without passing to a triple cover and using eigenspectra, does \emph{not} rule out $A_6$.  It was this sort of phenomenon that motivated this paper.
\end{example}

\begin{rem}\label{B9}
The eigenspectrum of a $\mu$-constant (semi-quasi-\linebreak homogeneous) deformation of $(F,\gamma)$ remains constant.  Even in the more general case of \cite[\S5.2]{KL2}, one can in principle still use the action of $\gamma^*$ on the (local) Jacobian ring $\co_{n+1}/J_F$ to refine $\sigma_F$ to $\sigma_F^g$. But Corollary \ref{B7} (and quasi-homogeneous deformations of Example \ref{B8}) will suffice for our purposes below.
\end{rem}

%%%%%%%%%%%%%%%%%%%%%%%%%%%%%%%

%%%%%%%%%%%%%%%%%%%%%%%%%%%%%%%
%%%%%%%%%%%%%%%%%%%%%%%%%%%%%%%
\section{Bounding nodes on Calabi-Yau hypersurfaces}\label{S3}

It is a classical problem to bound the number of nodes (ordinary double points) on a projective hypersurface, especially for Calabi-Yau varieties.  In this section, we use eigenspectra to produce such a bound for hypersurfaces in many weighted projective spaces (Theorem \ref{C6}).  Though our emphasis is on CY varieties for illustrative purposes, it is not limited to them. In the special case of projective space, our formula recovers the bound conjectured by Arnol'd \cite{Ar} and proved by Varchenko \cite{Va} (cf.~also \cite{vS2}) by applying his semicontinuity theorem to the Bruce deformation.  This includes the famous bound of $135$ for a quintic threefold; see Example \ref{C8}.

Let $\WW=\WP[e_0:\cdots:e_{n+1}]$ be a weighted projective $(n+1)$-space with finitely many singularities.\footnote{We may assume (without loss of generality) that no $n+1$ of the $e_i$ have a common factor.} Suppose we want to bound (numbers and types of) singularities on a hypersurface $X_0=\{F_0(\UW)=0\}\subset \WW$ of degree $d$, where a smooth such hypersurface would have Hodge numbers $\uh=(h^{n,0},h^{n-1,1},\ldots ,h^{0,n})$.  Write $d_i=\tfrac{d}{e_i}$ for $i=0,\ldots,n+1$.

We shall assume that the singularities of $X_0$ are all isolated. Taking a general deformation $F_t=F_0+tG$ to produce a family of $f\colon \X\to \Delta$ with smooth total space, the vanishing-cycle sequence
\begin{equation}\label{C1}
0\to H^n(X_0)\to H^n_{\lim}(X_t)\to \textstyle\bigoplus_{x\in \Sigma}V_x \overset{\delta}{\to} H^{n+1}_{\text{ph}}(X_0)\to 0
\end{equation}
offers a naive such bound: first, by Schmid's nilpotent orbit theorem, the rank of $\gr_F^p$ remains constant in the limit, giving the second equality of 
\begin{equation}\label{C2}
h^{p,n-p}=h^{p,n-p}(X_t)=\textstyle\sum_q h^{p,q}_{\lim}(X_t)\geq \sum_q h^{p,q}(\ker(\delta)).	
\end{equation}
Moreover, the mixed spectrum $\sigma_{f,x}$ tells us the $h^{p,q}_{\zeta}(V_x)=\dim(V^{p,q}_{x,\zeta})$ (for each eigenvalue $\zeta$ of $T_{\text{ss}}$), and only the $V_{x,1}^{p,n+1-p}$ can map nontrivially under $\delta$.  Since the hyperplane class also has $T_{\text{ss}}$-eigenvalue $1$, \eqref{C2} forces $\sum_q\sum_{\zeta\neq 1}\dim(V^{p,q}_{x,\zeta})\leq h^{p,n-p}_{\text{pr}}$.

When $x$ is a node, i.e. $f\overset{\text{loc}}{\sim}\sum_{i=1}^{n+1}z_i^2$, Prop.~\ref{B3} gives $V_{x,\CC}=V^{\frac{n}{2},\frac{n}{2}}_{x,-1}$ for $n$ even and $V_{x,1}^{\frac{n+1}{2},\frac{n+1}{2}}$ for $n$ odd.  In the latter case, \eqref{C2} yields no immediate bound on the number of nodes (though one does have results like \cite[Thm.~2.9+Cor~2.11]{KL2}).  For $n=2m$ even, \eqref{C2} yields\footnote{This is by the same residue theory as used in the proof of Theorem \ref{C4} below. The notation `$*$' is from Example \ref{B8}.}
\begin{equation}\label{C3}
h^{\frac{n}{2},\frac{n}{2}}_{\text{pr}}(X_t)=\,\text{coefficient of $[\tfrac{n}{2}+1]$ in $\Gamma_{d_0}*\Gamma_{d_1}*\cdots *\Gamma_{d_{n+1}}$}
\end{equation}
as a bound, which while better than nothing is rather weak.

\begin{example}\label{CE}
The simplest nontrivial case is $\WW=\PP^3$ ($n=2$) and ($d_0=d_1=d_2=d_3=$)$\,d=4$, where $\Gamma_4^{*4}=([\tfrac{1}{4}]+[\tfrac{1}{2}]+[\tfrac{3}{4}])^{*4}=$
\begin{equation}\label{Ce}
[1]+4[\tfrac{5}{4}]+10[\tfrac{3}{2}]+16[\tfrac{7}{4}]+19[2]+16[\tfrac{9}{4}]+10[\tfrac{5}{2}]+4[\tfrac{11}{4}]+[3]
\end{equation}
correctly gives $19=h^{1,1}_{\text{pr}}(X_t)$.  This is also a poor bound for the number of nodes on a quartic surface (cf. Example \ref{C6}).
\end{example}

However, there is a simple trick which improves the bound while also giving one for odd $n$:

\begin{thm}\label{C4}
The number of nodes on $X_0$ is bounded by the coefficient, in $\Gamma_{d_0}*\Gamma_{d_1}*\cdots*\Gamma_{d_{n+1}}$, of $[\tfrac{n+1}{2}+\tfrac{1}{2d}]$ if $n$ is even and $d$ is odd, or of $[\tfrac{n+1}{2}+\tfrac{1}{d}]$ otherwise.	
\end{thm}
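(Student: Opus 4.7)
The plan is to pass to the $d$-fold cyclic cover
\[
\pi\colon Y_0 := \{w^d = F_0(\UW)\} \;\subset\; \WW' := \WP[1: e_0: \cdots: e_{n+1}],
\]
a degree-$d$ hypersurface, and extend to a family $\Y \to \Delta$ via $Y_t = \{w^d = F_0 + t G\}$ with $Y_t$ smooth for $t \neq 0$ and $G$ generic. The $\mu_d$-action $g\colon w \mapsto \zeta_d w$ commutes with $T_{\mathrm{ss}}$, refining Cor.~\ref{A9}'s sequence into $\chi_c$-eigenspaces; the idea is to apply the Hodge-bound argument from \eqref{C2} within each non-trivial $\chi_c$-eigenspace and then optimize over $c$.

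Each node $x \in \Sigma$ lifts to a singularity $y \in Y_0$ locally modeled on the Brieskorn--Pham polynomial $w^d + z_1^2 + \cdots + z_{n+1}^2 = 0$. By Cor.~\ref{B7} (cf.~Example~\ref{B8}), $V_y$ has monomial basis $\{\omega_{\beta_w}\}_{0 \leq \beta_w \leq d-2}$, with spectral value $\alpha(\beta_w) = \tfrac{n+3}{2} - \tfrac{\beta_w+1}{d}$, Hodge bidegree $(\lfloor\alpha\rfloor,\, n+1-\lfloor\alpha\rfloor)$ when $\alpha \notin \ZZ$, $g^*$-character $\chi_{\beta_w+1}$, and $T_{\mathrm{ss}}$-eigenvalue $(-1)^{n+1}e^{-2\pi i(\beta_w+1)/d}$. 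Hence each $V_y^{\chi_c}$ ($1 \leq c \leq d-1$) is one-dimensional, with nontrivial $T_{\mathrm{ss}}$-eigenvalue except in the one degenerate case $n$ even and $c = d/2$ (which requires $d$ even).

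Excluding that case, $V_y^{\chi_c} \subseteq \ker \delta$ by Cor.~\ref{A9}; combined with Schmid's nilpotent-orbit theorem, this gives
\[
\#\Sigma \;=\; \sum_y \dim V_y^{\chi_c} \;\leq\; h^{p(c),\, n+1-p(c)}_{\chi_c}(Y_t), \qquad p(c) = \lfloor \alpha(c) \rfloor.
\]
Next, I compute $h^{p, n+1-p}_{\chi_c}(Y_t)$ by Griffiths residues on the Fermat model $\tilde F = w^d + \sum z_i^{d_i}$: the $\chi_c$-eigenspace of the Jacobian ring consists of monomials $w^{c-1} \uz^{\ubb}$ with $\ubb$ ranging over a monomial basis for the Jacobian of $F_t$, and matching weighted degrees in the requisite Griffiths graded piece yields
\[
h^{p,\,n+1-p}_{\chi_c}(Y_t) \;=\; \text{coefficient of}\ [p + c/d]\ \text{in}\ \Gamma_{d_0} * \Gamma_{d_1} * \cdots * \Gamma_{d_{n+1}}.
\]

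Finally, I minimize over the admissible $c$. Since $\Gamma_{d_0}*\cdots*\Gamma_{d_{n+1}}$ is symmetric and unimodal about its central value $[(n+2)/2]$, the smallest admissible coefficient is achieved at the $c$ closest to the obstructed value $d/2$: explicitly $c = d/2 \pm 1$ when $d$ is even, and $c = (d \pm 1)/2$ when $d$ is odd. Substituting yields $p(c) + c/d = (n+1)/2 + 1/d$ except in the case $n$ even and $d$ odd, where the ``skipped-center'' configuration gives the finer value $p(c) + c/d = (n+1)/2 + 1/(2d)$ --- exactly the theorem's bound. The main technical hurdle is the character-refined Griffiths identification displayed above, which requires careful bookkeeping of weighted degrees and $\mu_d$-eigenvalues under the residue form to express the $\chi_c$-piece of $Y_t$'s Hodge numbers as the shifted coefficient in the Fermat convolution for $X_t$.
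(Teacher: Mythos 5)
Your overall strategy is the same as the paper's: pass to the degree-$d$ cyclic cover $Y_t$, observe that each node lifts to an $A_{d-1}$ singularity with one-dimensional $\chi_c$-eigenspectral pieces, compute the $\chi_c$-eigenspace Hodge numbers via Griffiths/Dolgachev residues as coefficients in $\Gamma_{d_0}*\cdots*\Gamma_{d_{n+1}}$, and then apply the eigenspace-refined vanishing-cycle bound. However, the optimization step contains a genuine error for $n$ odd, and as written your argument does not establish the theorem's bound in that case.

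When $n$ is odd, $\tfrac{n+3}{2}\in\ZZ$, so $p(c)=\lfloor \tfrac{n+3}{2}-\tfrac{c}{d}\rfloor = \tfrac{n+1}{2}$ is \emph{constant} over all $0<c<d$, and the bound is the coefficient of $\bigl[\tfrac{n+1}{2}+\tfrac{c}{d}\bigr]$, whose distance from the center $\tfrac{n+2}{2}$ is $\bigl|\tfrac{c}{d}-\tfrac12\bigr|$. This distance is \emph{maximized}, and hence (by your unimodality observation) the coefficient minimized, at the \emph{endpoints} $c=1$ or $c=d-1$, which are the choices farthest from $d/2$, not closest. Substituting your claimed optimal $c=d/2+1$ (for $d$ even, $n$ odd) actually gives $p(c)+c/d = \tfrac{n+2}{2}+\tfrac1d$, not the value $\tfrac{n+1}{2}+\tfrac1d$ you report; by unimodality this is a \emph{strictly weaker} bound than the theorem asserts. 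The correct choice for $n$ odd is $c=1$ (equivalently $j=1$ in the paper's indexing, which relates to yours by $c=d-j$), giving $p(c)+c/d=\tfrac{n+1}{2}+\tfrac1d$ as required. Your analysis for $n$ even is fine, and there your claim about $c$ near $d/2$ being optimal is correct, precisely because there $p(c)$ jumps as $c/d$ crosses $1/2$. Note also that the theorem only requires exhibiting one admissible $c$ for each parity; the unimodality/optimality discussion is a bonus not needed for the proof, and the paper simply takes $j=1$ resp.\ $j=\lceil\tfrac{d+1}{2}\rceil$ and verifies the arithmetic directly.
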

\begin{proof}
Let $Y_t=\{F_t(\UW)+W_{n+2}^d=0\}\subset \WP[\ue{:}1]=:\widetilde{\WW}$ be the cyclic $d{:}1$-cover of $\WW$ branched over $X_t$, with $g\colon W_{n+2}\mapsto \zeta_d W_{n+2}$ the cyclic automorphism. By Dolgachev's extension of Griffiths's residue theory \cite{Do}, a basis for the $g^*$-eigenspace $H^{n-q+1,q}_{\text{pr}}(Y_t)^{\bar{\zeta}_d^j}$ ($t\neq 0$, $0\leq j<d$) is given by the Poincar\'e residue classes $$\mathrm{Res}_{Y_t}\left(\frac{\UW^{\underline{k}-\underline{1}}W_{n+2}^{d-j-1}\Omega_{\widetilde{\WW}}}{(F_t+W_{n+2}^d)^{q+1}}\right)$$
with $k_i\in \ZZ\cap (0,d_i)$ ($i=0,\ldots,n+1$) and weights of numerator and denominator equal:  that is, $\sum_{i=0}^{n+1}e_ik_i+(d-j)=(q+1)d$, or equivalently (dividing by $d$) $\sum_{i=0}^{n+1}\tfrac{k_i}{d_i}=q+\tfrac{j}{d}$. Hence $\dim \mathrm{Gr}_F^{n-q+1} H_{\lim}^{n+1}(Y_t)^{\bar{\zeta}_d^j}=h^{n-q+1,q}(Y_t)^{\bar{\zeta}_d^j}$ is given (for $0<j<d$) by the coefficient of $[q+\tfrac{j}{d}]$ in $\Gamma_{d_0}*\cdots*\Gamma_{d_{n+1}}$.

Each node $x\in X_0$ becomes an $A_{d-1}$ singularity $y\in Y_0$, with eigenspectrum $\sum_{j=1}^{d-1}(\tfrac{n+1}{2}+\tfrac{j}{d},n+1,-\tfrac{j}{d})$ unless $n$ is even and $d$ is even (in which case the middle entry is $n+2$ at $j=\tfrac{d}{2}$). If $r$ is the number of nodes, applying \eqref{C1}-\eqref{C2} to $\Y$ and refining by $g^*$-eigenspaces therefore yields $h^{p_j,q_j}(Y_t)^{\bar{\zeta}_d^j}\geq r$ (for $0<j<d$), where $p_j=\lfloor \tfrac{n+1}{2}+\tfrac{j}{d}\rfloor$ and $q_j=n+1-p_j$.  Taking $j=1$ if $n$ is odd and $j=\lceil \tfrac{d+1}{2}\rceil$ if $n$ is even (so that $p_j=\tfrac{n+1}{2}$ resp.~$\tfrac{n}{2}+1$) yields $q_j+\tfrac{j}{d}=\tfrac{n+1}{2}+\tfrac{1}{d}$ resp.~$\tfrac{n}{2}+\tfrac{1}{d}\lceil\tfrac{d+1}{2}\rceil$, hence the claimed bound.
\end{proof}

\begin{rem}\label{C5}
As mentioned above, when $\WW=\PP^{n+1}$ this recovers Varchenko's bound \cite{Va}.  While Varchenko also uses the ``cyclic-cover trick'', our approach avoids the use of deformations and semicontinuity.
\end{rem}

\begin{example}\label{C6}
For CY hypersurfaces in $\PP^{n+1}$ ($d=n+2$), Thm.~\ref{C4} yields the bounds $3$, $16$, $135$, $1506$, and $20993$ for $n=1,2,3,4,5$, the first two of which are sharp.\footnote{The union of $3$ lines in $\PP^2$ has $3$ nodes, and a Kummer quartic $K3$ in $\PP^4$ has $16$ nodes. The bounds here are the coefficients of $[\tfrac{n+1}{2}+\tfrac{1}{n+2}]$ in $\Gamma_{n+2}^{*(n+2)}$; e.g., $16$ is the coefficient of $[\tfrac{7}{4}]$ in \eqref{Ce}.} (This is also better than what \eqref{C3} yields for $n=2$ and $4$, namely $19$ and $1751$.) It is still not known whether $135$ is sharp for quintic $3$-folds. The well-known Fermat pencil has fiber $W_0^5+\cdots+W_4^5=5W_0\cdots W_4$, with $125=|(\ZZ/5\ZZ)^3|$ nodes, while the example of van Straten \cite{vS1} with $130$ nodes remains the record.	
\end{example}

\begin{rem}\label{C7}
For $n=2$, the following bound by Miyaoka \cite{Mi} sometimes yields better results. If $X$ is any smooth projective surface which is smooth except at $r$ nodes, and $K_X$ is nef, then $r\leq 8\chi(\co_X)-\tfrac{8}{9}K_X^2$.

(a) For $X\subset \PP^3$ a surface of degree $d$, this yields the bound $\frac{4}{3}(d-1)(d-2)(d-3)+8-\tfrac{8}{9}d(d-4)^2=\tfrac{4}{9}d(d-1)^2$, which is better than Thm.~\ref{C4} for $d\geq 6$ even or $d\geq 15$ odd. A case in point is $d=6$, where \eqref{C3} gives $85$, the Theorem $68$, and Miyaoka $66$; this was further reduced to $65$ (which is sharp) by a clever use of coding theory \cite{JR}. Another is $d=8$, where we get $r\leq 174$.

(b) As a weighted projective example, one can consider surfaces $X$ of degree $10$ in $\mathbb{WP}[1{:}1{:}1{:}2]$. We have $\chi(\co_X)=1+h^2(\co_X)=35$ and $(K_X\cdot K_X)_X=(X\cdot (X+K_{\WW})^2)_{\WW}=\tfrac{10(10-5)^2}{1\cdot 1\cdot 1\cdot 2}=125$, hence $r\leq \lfloor \tfrac{1520}{9}\rfloor =168$.
\end{rem}

\begin{exs}\label{C8}
We consider some CY 3-fold hypersurfaces with $r$ nodes in weighted projective 4-folds.

(i) $X_0\subset \WP[1{:}1{:}1{:}1{:}2]$ of degree $6$: the Theorem yields $r\leq 137$, while the ``Fermat pencil'' type example $W_0^6+\cdots +W_3^6+W_4^3=3\cdot 2^{\frac{2}{3}}W_0\cdots W_4$ has $|((\ZZ/6\ZZ)^3\times \ZZ/3\ZZ)/(\ZZ/6\ZZ)|=108$ nodes.

(ii) $X_0\subset \WP[1{:}1{:}1{:}1{:}4]$ of degree $8$: the Theorem yields $r\leq 180$, while $W_0^8+\cdots +W_3^8+W_4^2=4W_0\cdots W_4$ has $|((\ZZ/8\ZZ)^3\times \ZZ/2\ZZ)/(\ZZ/8\ZZ)|=128$ nodes. Here we can improve both the bound and example, since $X_0$ is (by the quadratic formula) a double-cover of $\PP^3$ branched along an $r$-nodal octic surface. So Rem.~\ref{C7}(a) gives $r\leq 174$, while Endra\ss's example \cite{En} has $r=168$.

(iii) $X_0\subset \WP[1{:}1{:}1{:}2{:}5]$ of degree $d=10$: the Theorem yields $r\leq 169$, but because these are double covers of $WP[1{:}1{:}1{:}2]$ branched along an $r$-nodal dectic surface, Rem.~ \ref{C7}(b) reduces the bound to $168$. The standard example is $W_0^{10}+W_1^{10}+W_2^{10}+W_3^5+W_4^2=2^{\frac{4}{5}}5^{\frac{1}{2}}W_0\cdots W_4$, but this has only $100$ nodes. One can do somewhat better by taking the preimage of a Togliatti quintic \cite{Be} (with $31$ nodes avoiding the coordinate axes) under $\WP[1{:}1{:}1{:}2]\overset{1{:}2}{\twoheadrightarrow}\WP[1{:}1{:}2{:}2]\overset{1{:}2}{\twoheadrightarrow}\WP[1{:}2{:}2{:}2]\cong\PP^3$, to get $4\cdot 31=124$.
\end{exs}

In the case of a symmetric hypersurface $X_0\subset \PP^{n+1}$, cut out by $F_0\in \CC[\UW]^{\mathfrak{S}_{n+2}}$ (homogeneous of degree $d$), one can consider the family $\Y\to \Delta$ of $d$-fold cyclic covers branched along an $\mathfrak{S}_{n+2}$-invariant smoothing $\X\to \Delta$. A full accounting of this story gets into $G$-spectra ($G\cong \mu_d\times\mathrm{stab}_{\mathfrak{S}_{n+2}}(x)$) of the resulting $A_{d-1}$ singularities of $Y_0$. This leads to constraints, via character theory of $\mathfrak{S}_{n+2}$, on how $\Sigma$ can be built out of $\mathfrak{S}_{n+2}$-orbits. (However, it does not, for example, \emph{rule out} the possibility of $135$ nodes on an $\mathfrak{S}_{5}$-symmetric quintic threefold.) Here we shall only give the simplest result in this direction:

\begin{thm}\label{C9}
A symmetric CY hypersurface in $\PP^{n+1}$ \textup{(}of degree $d=n+2$\textup{)} with isolated singularities cannot contain a node with trivial stabilizer	in $\mathfrak{S}_{n+2}$.
\end{thm}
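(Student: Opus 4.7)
The plan is to adapt the cyclic-cover technique from Theorem~\ref{C4} so as to exploit the extra $\mathfrak{S}_{n+2}$-symmetry. Suppose for contradiction that $X_0$ has a node $x$ with trivial $\mathfrak{S}_{n+2}$-stabilizer, so that its orbit comprises $(n+2)!$ distinct nodes. First I would choose an $\mathfrak{S}_{n+2}$-equivariant smoothing $\X\to\Delta$ (by perturbing $F_0$ within $\mathfrak{S}_{n+2}$-invariant polynomials of degree $d=n+2$) and form the $d$-fold cyclic cover $\Y\to\Delta$ with $Y_t\subset\PP^{n+2}$. Then $\mathfrak{S}_{n+2}\times\mu_d$ acts on $\Y/\Delta$ (permuting the first $n+2$ coordinates, and scaling $W_{n+2}$), and by Corollary~\ref{A9} the vanishing-cycle sequence \eqref{C1} for $\Y/\Delta$ is an exact sequence of MHS equivariant under this action.

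Next I would fix $j=1$ and $(p_1,q_1)=(\lfloor\tfrac{n+1}{2}+\tfrac{1}{d}\rfloor,\,n+1-p_1)$, and locate a large invariant subspace. By Example~\ref{B8}, each $A_{d-1}$ singularity $y\in Y_0$ lying over a node in the orbit contributes a $1$-dimensional piece to $V_y^{p_1,q_1}$ in the $\bar{\zeta}_d^1$-eigenspace, and this piece is pure of weight $n+1$ (since $j=1\neq d/2$ for $d\geq 3$). Because $\mathfrak{S}_{n+2}$ permutes these $(n+2)!$ singularities freely, the orbit's total contribution to $V^{p_1,q_1}_{\bar{\zeta}_d^1}$ is a copy of the regular representation $\CC[\mathfrak{S}_{n+2}]$. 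Being of weight $n+1$, it is killed by $\delta$ (whose target $H^{n+2}_{\mathrm{ph}}(Y_0)$ is pure of weight $n+2$), so it embeds as an $\mathfrak{S}_{n+2}$-subrepresentation of $\gr_F^{p_1}H^{n+1}_{\lim}(Y_t)^{\bar{\zeta}_d^1}$, which identifies with $H^{p_1,q_1}(Y_t)^{\bar{\zeta}_d^1}$ (Remark~\ref{A13}; using semisimplicity of the $\mathfrak{S}_{n+2}$-representation category to lift the subquotient).

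The contradiction will come by showing $(H^{p_1,q_1}(Y_t)^{\bar{\zeta}_d^1})^{\mathfrak{S}_{n+2}}=0$, since the regular representation has a $1$-dimensional invariant subspace. By Dolgachev's residue theory (as in the proof of Theorem~\ref{C4}), $H^{p_1,q_1}(Y_t)^{\bar{\zeta}_d^1}$ has a basis $\{e_{\underline{k}}\}$ indexed by tuples $\underline{k}=(k_0,\ldots,k_{n+1})\in\{1,\ldots,d-1\}^{n+2}$ with $\sum_i k_i=q_1 d+1$. Because $\sigma^*\Omega_{\PP^{n+2}}=\mathrm{sgn}(\sigma)\,\Omega_{\PP^{n+2}}$ for $\sigma\in\mathfrak{S}_{n+2}$ (acting on the first $n+2$ coordinates), one obtains $\sigma\cdot e_{\underline{k}}=\mathrm{sgn}(\sigma)\,e_{\sigma(\underline{k})}$, so an $\mathfrak{S}_{n+2}$-invariant vector $\sum c_{\underline{k}}e_{\underline{k}}$ must satisfy $c_{\underline{k}}=\mathrm{sgn}(\sigma)c_{\underline{k}}$ for every $\sigma\in\mathrm{Stab}(\underline{k})$. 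But with $n+2$ entries chosen from the $(n+1)$-element set $\{1,\ldots,d-1\}$, the pigeonhole principle forces a repeated value; hence every $\mathrm{Stab}(\underline{k})$ contains a transposition, and $c_{\underline{k}}=0$ for all $\underline{k}$.

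The main obstacle is correctly tracking the $\mathrm{sgn}$-twist coming from $\sigma^*\Omega_{\PP^{n+2}}$, since this twist determines whether the pigeonhole kills the trivial or the $\mathrm{sgn}$-isotype; either way one gets the contradiction, but the accounting matters because the regular representation contains both one-dimensional irreducibles with multiplicity one. A secondary point to verify is that the $\mathfrak{S}_{n+2}$-equivariant smoothing actually smooths $\X$ generically, which is standard.
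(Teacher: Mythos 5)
Your proof is correct and follows essentially the same route as the paper: pass to the $d$-fold cyclic cover, locate a copy of the regular representation of $\mathfrak{S}_{n+2}$ inside a fixed $(p,q)$-graded $g^*$-eigenspace of $H^{n+1}(Y_t)$ using the eigenspectrum of the $A_{d-1}$ singularities, and then show via Dolgachev's residue basis and pigeonhole (forced repeated exponents) that this eigenspace contains no copy of the trivial representation. The only cosmetic differences are that you work in the conjugate $g^*$-eigenspace ($j=1$ rather than $j=d-1$), and that your final step argues coefficient-by-coefficient using transpositions in the stabilizer rather than via Burnside's lemma applied to monomial orbits.
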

\begin{proof}
Suppose otherwise; then $Y_0$ has a set of $(n+2)!$ $A_{n+1}$ singularities with eigenspectra $\sum_{j=1}^{n+1}(\tfrac{n+1}{2}+\tfrac{j}{n+2},n+1,\tfrac{-j}{n+2})$. This contributes a subspace $V$ of dimension $(n+2)!$ to the $g^*$-eigenspace\footnote{As before, $g\colon W_{n+2}\mapsto \zeta_{n+2}W_{n+2}$ denotes the cyclic automorphism of $Y_t$.} $H^{n+1}_{\van}(Y_t)^{\zeta_{n+2}}$. It is closed under the action of $\mathfrak{S}_{n+2}$, and the triviality of the stabilizers of these $A_{n+1}$ singularities means that the trace of any $\sigma\in \mathfrak{S}_{n+2}\sm\{1\}$ is zero. So $V$ is a copy of the regular representation of $\mathfrak{S}_{n+2}$, which belongs to $\ker(\delta)\subseteq H^{\frac{n+1}{2},\frac{n+1}{2}}_{\van}(Y_t)^{\zeta_{n+2}}$. By the compatibility\footnote{This is nothing but Cor.~\ref{A9} with $\mathcal{G}=\langle g\rangle \times \mathfrak{S}_{n+2}$.} of the vanishing-cycle sequence for $\Y$ with $g^*$ and $\mathfrak{S}_{n+2}$, this forces a copy of the regular representation in $H_{\lim}^{\frac{n+1}{2},\frac{n+1}{2}}(Y_t)^{\zeta_{n+2}}$, hence $H^{\frac{n+1}{2},\frac{n+1}{2}}(Y_t)^{\zeta_{n+2}}$ for $t\neq 0$ (as $\mathfrak{S}_{n+2}$ acts on the VHS, compatibly with taking limits, cf.~Remark \ref{A13}).

Now $U:=\H^{\frac{n+1}{2},\frac{n+1}{2}}(Y_t)^{\zeta_{n+2}}$ has a basis of the form $$\eta_{\underline{k}}:=\mathrm{Res}_{Y_t}\left(\frac{\UW^{\underline{k}-\underline{1}}\Omega_{\PP^{n+2}}}{(F_0(\UW)+W^{n+2}_{n+2})^{\frac{n+3}{2}}}\right),$$
where $0<k_i<n+2$ (for $i=0,\ldots,n+1$) and (for equality of weights of numerator and denominator) $(\sum_{i=0}^{n+1}k_i)+1=\tfrac{n+3}{2}(n+2)$. Here $\mathfrak{S}_{n+2}$ acts trivially on the denominator, through the sign representation $\chi$ on $\Omega_{\PP^{n+2}}$, and by permutations of the $W_i$ on $\UW^{\underline{k}-\underline{1}}$. We claim that $U$ contains no copy of the trivial representation, \emph{a fortiori} of the regular representation, furnishing the desired contradiction.

Clearly it is equivalent to show that the representation of $\mathfrak{S}_{n+2}$ on the $\CC$-span $\widetilde{U}\,(\cong U\otimes \chi)$ of the monomials $\{\UW^{\underline{k}}\}_{\underline{k}\text{ as above}}$ contains no copy of $\chi$. Let $o:=\mathfrak{S}_{n+2}.\UW^{\underline{k}}$ be an orbit and $\widetilde{U}_{o}\subseteq \widetilde{U}$ its span. By Burnside's Lemma, $\tfrac{1}{(n+2)!}\sum_{g\in \mathfrak{S}_{n+2}}|o^g|=1$. On the other hand, $\underline{k}=(k_0,\ldots,k_{n+1})$ contains a repeated entry since there are only $n+1$ choices for each $k_i$; hence for some transposition $\tau$, $|o^{\tau}|\neq 0$. Since $\mathrm{sgn}(\tau)=-1$, this forces $\tfrac{1}{(n+2)!}\sum_{g\in \mathfrak{S}_{n+2}}\mathrm{sgn}(g)\,|o^g|$, which computes the number of copies of $\chi$ in $\widetilde{U}_o$, to be zero.
\end{proof}

For $n=1$ or $2$ this result is obvious (since $6>3$ and $24>16$), but for $n=3$, $4$, or $5$ it is less so (as $120<135$, $720<1506$, and $5040<20993$). In particular, since the examples of quintic 3-folds with $125$ and $130$ nodes are $\mathfrak{S}_5$-symmetric, and the latter has a $60$-node orbit, it is interesting that a $120$-node orbit is impossible.

%%%%%%%%%%%%%%%%%%%%%%%%%%%%%%%%%%%%
%%%%%%%%%%%%%%%%%%%%%%%%%%%%%%%%%%%%
\section{Cyclic covers of $\PP^1$}\label{S4}

In the final two sections we turn to ``codimension-one'' monodromy phenomena for period maps arising from cyclic covers. We begin with a story that generalizes elliptic curves and goes back to Deligne and Mostow \cite{DM} (see also \cite{Mn}). Given distinct points $t_1,\ldots,t_{2m}\in \PP^1$ (with projective coordinates $[S_i{:}T_i]$), define 
$$C_{\ut}:=\{[Z_0{:}Z_1{:}Z_2]\in \PP[1{:}1{:}2]\mid Z_2^m=\textstyle\prod_{i=1}^{2m}(S_iZ_1-T_iZ_0)\},$$
with automorphism $g([Z_0{:}Z_1{:}Z_2]):=[Z_0{:}Z_1{:}\zeta_m Z_2]$. For $m=2,3,4$, or $6$, the sum of $g^*$-eigenspaces $H^1(C_{\ut})^{\zeta_m}\oplus H^1(C_{\ut})^{\bar{\zeta}_m}$ produces a $\QQ$-VHS over $M_{0,2m}$,\footnote{Recall that $M_{0,n}$ parametrizes \emph{ordered} $n$-tuples of distinct points on $\PP^1$ modulo the action of $\mathrm{PSL}_2(\CC)$.} hence a period map to an arithmetic ball quotient $\Gamma\backslash\BB_{2m-3}$. This turns out to be injective,\footnote{For $m=6$ one has to quotient $M_{0,12}$ by $\mathfrak{S}_{12}$; see \cite{GKS}.} and extends to an isomorphism between GIT resp.~Hassett/KSBA compactifications of $M_{0,2m}$ and Baily-Borel resp.~toroidal compactifications of the ball quotient \cite{DM,GKS}.

So what if $m\neq 2,3,4,$ or $6$?  In the discussion that ensues, we will not be concerned with ball quotients or even the period map \emph{per se}, but only with 
\begin{itemize}[leftmargin=0.5cm]
\item the $\QQ$-VHS $\V$ over $M_{0,2m}$ arising from $H^1(C_{\ux})$, 
\item its sub-$\CC$-VHSs $\V^{\zeta_m^j}:=\ker(g^*-\zeta_m^j I)$ ($1\leq j\leq m-1$), and 
\item their limiting behavior along the boundary of the Hassett compactifications $\overline{M}_{0,[\frac{1}{m}+\epsilon]_{2m}}$ (see below). 
\end{itemize}
The point is that \emph{these can be considered uniformly for all $m\geq 2$}, not just $m=2,3,4,$ and $6$.  Moreover, using eigenspectra, \emph{we can easily compute LMHS and monodromy types along the Hassett boundary strata}, as we demonstrate in \ref{D5}-\ref{D6a}.  This is the first step toward a global study of the extended period map for this series of examples, which will necessarily go beyond the arithmetic ball quotient setting (cf.~Remark \ref{D7}).  We also refer the reader to \cite{DG}, where global partial compactifications of the period maps for some other non-Deligne-Mostow cases are constructed.

To begin with, in affine coordinates $x=\tfrac{Z_1}{Z_0}$, $y=\tfrac{Z_2}{Z_0}$, $C_{\ut}$ takes the form $y^m=f_{\ut}(x):=\prod_{i=1}^{2m}(x-t_i)$ [resp.~$\prod_{i\neq j}(x-t_i)$, if $t_j=\infty$]. While there are three possibilities for the Newton polytope $\Delta$, they all have the same interior integer points
$$(\Delta\sm\partial\Delta)\cap \ZZ^2=\{(i,j)\mid 1\leq j\leq m-1,\;1\leq i\leq 2(m-j)-1\},$$
which provide a basis of $\Omega^1(C_{\ut})$ via
$$\omega_{(i,j)}:=\mathrm{Res}_{C_{\ut}}\left(\frac{x^{i-1}y^{j-1}dx\wedge dy}{y^m-f_{\ut}(x)}\right).$$
Since $g^*\omega_{(i,j)}=\zeta_m^j\omega_{(i,j)}$, we find that 
\begin{equation}\label{D1}
\left\{\begin{array}{cc} \mathrm{rk}(\V^{\zeta_m^j})^{1,0}=2(m-j)-1, & \mathrm{rk}(\V^{\zeta_m^j})^{0,1}=2j-1 \\ \mathrm{rk}\V^{\zeta_m^j}=2m-2,\;\;\;\;\;\text{and} & \mathrm{rk}\V=2(m-1)^2. \end{array}\right.
\end{equation}
For example, if $m=5$, then $C_{\ut}$ has genus $12$; and $\V_{\CC}$ decomposes into four $\CC$-VHSs $\{\V^{\zeta_5^j}\}_{j=1}^4$ with respective Hodge numbers $(7,1)$, $(5,3)$, $(3,5)$, and $(1,7)$.

Next, we need the following:
\begin{defn}[\cite{Ha}]\label{D2}
A \emph{weighted stable rational curve} for the weight $\underline{\mu}:=(\mu_1,\ldots,\mu_n)\in \{(0,1]\cap \QQ\}^{\times n}$ is a pair\footnote{Despite the sum notation, the order of points with equal weights is retained.} $(\mathcal{C},\sum\mu_i p_i)$ with:
\begin{itemize}[leftmargin=0.5cm]
\item $\mathcal{C}$ a nodal connected projective curve of arithmetic genus $0$;
\item each $p_i$ a smooth point of $\mathcal{C}$;
\item if $p_{i_1}=\cdots =p_{i_r}$, then $\mu_{i_1}+\cdots+\mu_{i_r}\leq 1$; and
\item the $\QQ$-divisor $K_{\mathcal{C}}+\sum_{i=1}^n \mu_i p_i$ is ample (i.e.~on each irreducible component, the sum of weights plus number of nodes is $>2$).
\end{itemize}
We will write $(\mu,\ldots,\mu)=:[\mu]_n$ for repeated weights.
\end{defn}

\begin{thm}[\cite{Ha}]\label{D3}
\textup{(i)} There exists a smooth projective fine moduli space $\overline{M}_{0,\underline{\mu}}$ parametrizing $\underline{\mu}$-weighted stable rational curves, and containing $M_{0,n}$ as a Zariski-open subset.

\textup{(ii)}	 Given weights $\underline{\mu}=(\mu_1,\ldots,\mu_n)$ and $\tilde{\underline{\mu}}=(\tilde{\mu}_1,\ldots,\tilde{\mu}_n)$ with $\mu_i\leq \tilde{\mu}_i$ \textup{(}$\forall i$\textup{)}, there exists a birational \emph{reduction morphism} $\pi_{\tilde{\underline{\mu}},\underline{\mu}}\colon \overline{M}_{0,\tilde{\underline{\mu}}}\twoheadrightarrow \overline{M}_{0,\underline{\mu}}$ contracting all components which violate the ampleness property in \eqref{D2} for the weight $\tilde{\underline{\mu}}$.
\end{thm}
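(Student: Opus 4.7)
The plan is to treat this cited result of Hassett by constructing $\overline{M}_{0,\underline{\mu}}$ inside the Knudsen--Deligne--Mumford compactification $\overline{M}_{0,n}$ and then reading off (ii) from the functorial construction of (i). The core idea is that raising or lowering the weights $\underline{\mu}$ changes which boundary strata of $\overline{M}_{0,n}$ are stable, so the various $\overline{M}_{0,\underline{\mu}}$ should be obtained from $\overline{M}_{0,n}$ by contracting the loci that fail the ampleness condition in Definition \ref{D2}.

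For part (i), I would first verify that the moduli \emph{functor} $\underline{M}_{0,\underline{\mu}}$ (assigning to $S$ the set of flat families of $\underline{\mu}$-weighted stable rational curves) is a sheaf, is bounded (at most $n$ marked points forces only finitely many dual graph types), and is separated and proper via the valuative criterion: given a one-parameter family over a DVR with generic fiber a $\underline{\mu}$-stable curve, stable reduction in $\overline{M}_{0,n}$ produces a nodal limit, and then one contracts the components whose weight-plus-nodes total drops to $\leq 2$. Uniqueness of this contraction (a standard Castelnuovo-type argument on rational components) gives separatedness, and existence gives properness. To represent the functor, I would define a morphism $\overline{M}_{0,[1]_n}=\overline{M}_{0,n}\twoheadrightarrow \overline{M}_{0,\underline{\mu}}$ as the log canonical contraction associated to the $\QQ$-divisor $K_{\overline{M}_{0,n}}+\sum\mu_i\psi_i$ (or equivalently, the normalization of the image of $\overline{M}_{0,n}$ under sections of a sufficiently high multiple of this divisor). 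Since this divisor is semi-ample by the known nef cone of $\overline{M}_{0,n}$ and its contracted curves correspond exactly to the boundary strata violating ampleness for weight $\underline{\mu}$, the image is a projective variety with the correct moduli interpretation; smoothness follows from the fact that the contractions are between smooth varieties along smooth centers, or alternatively from an explicit local computation on deformations of weighted stable curves, which are unobstructed.

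For part (ii), once the construction in (i) is functorial in $\underline{\mu}$, the reduction morphism is essentially tautological: given a $\tilde{\underline{\mu}}$-stable curve $(\C,\sum\tilde{\mu}_ip_i)$ over $S$ with $\mu_i\leq\tilde{\mu}_i$, iteratively contract every component $C'\subset \C$ on which $\deg(K_{\C'}+\sum_{p_i\in C'}\mu_ip_i)\leq 0$. Each such contraction is a Castelnuovo contraction of a smooth rational curve meeting the rest at one or two points; it is unique and commutes with base change, so it defines a natural transformation of moduli functors, hence the morphism $\pi_{\tilde{\underline{\mu}},\underline{\mu}}$. Birationality is clear because both spaces contain $M_{0,n}$ as an open subset on which the morphism is the identity.

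The main obstacle is the projectivity/semi-ampleness assertion underlying (i): one must show that the divisor cutting out the contraction really is semi-ample (not just nef), and that its contracted locus coincides \emph{exactly} with the strata violating the weighted ampleness. In Hassett's setting this is handled by explicit determination of the extremal rays of the Mori cone of $\overline{M}_{0,n}$ (Keel--McKernan type results) and matching them with ``weight-forbidden'' collisions of marked points; verifying this matching combinatorially, especially for non-generic $\underline{\mu}$ where several extremal contractions coincide, is the delicate step. Once this is in hand, smoothness of $\overline{M}_{0,\underline{\mu}}$ and fineness of the moduli problem follow from the corresponding properties of $\overline{M}_{0,n}$ together with the explicit local structure of the contractions.
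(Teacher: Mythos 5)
This theorem is stated with a citation to Hassett \cite{Ha}; the paper offers no proof of its own, so there is nothing internal to compare against. Evaluating your sketch on its own terms against Hassett's actual argument: your treatment of (ii) — iteratively collapsing components that become unstable under the lowered weights and checking that this stabilization commutes with base change, hence gives a natural transformation of functors — is essentially Hassett's construction of the reduction morphisms, and is fine as a sketch. Your route to (i), however, diverges substantially from Hassett and has genuine problems.

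Hassett does not build $\overline{M}_{0,\underline{\mu}}$ as the log canonical model of $(\overline{M}_{0,n}, \sum\mu_i\psi_i)$; he constructs the moduli stack of $\underline{\mu}$-weighted stable curves directly (boundedness, valuative criteria, deformation theory, representability), and the identification with a log canonical model is a separate statement that in genus $0$ was only established in later work (Fedorchuk, Kiem--Moon, et al.) as part of the Hassett--Keel program. Two specific gaps in your version: first, semi-ampleness of $K_{\overline{M}_{0,n}}+\sum\mu_i\psi_i$ is not a consequence of ``the known nef cone of $\overline{M}_{0,n}$'' --- the assertion that nef divisors on $\overline{M}_{0,n}$ are semi-ample is the (still open in full generality) F-conjecture, and proving semi-ampleness for this particular family of divisors, and matching its exceptional locus exactly with the weight-forbidden strata, is a theorem requiring real work that is not in Hassett's paper. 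Worse, to endow the resulting contraction with a modular interpretation you would essentially need to have already constructed $\overline{M}_{0,\underline{\mu}}$ by other means, which is circular. Second, ``smoothness follows from the fact that the contractions are between smooth varieties along smooth centers'' is simply false as stated: contracting a smooth divisor in a smooth variety typically produces a singular target. The correct argument (and the one Hassett uses) is the deformation-theoretic one you mention in passing — in genus $0$, weighted stable pointed curves have unobstructed deformations, so the moduli stack is smooth; and since every such curve has trivial automorphism group, the stack is an algebraic space and in fact a smooth projective fine moduli scheme.
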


\begin{rem}\label{D4}
(a) $\overline{M}_{0,[1]_n}$ reproduces the Deligne-Mumford-Knudsen compactification $\overline{M}_{0,n}$.

(b)	 Although the ampleness property forces $\sum\mu_i>2$, if for $|\underline{\mu}|=2$ we \emph{define} $\overline{M}_{0,\underline{\mu}}$ to be the GIT quotient $(\PP^1)^n\sslash_{\underline{\mu}}\mathrm{SL}_2$, then \eqref{D3}(ii) extends to this case; and if we take $\tilde{\mu}_i=\mu_i+\epsilon$ ($\epsilon\in \QQ$, $0<\epsilon\ll 1$) then $\pi_{\tilde{\underline{\mu}},\underline{\mu}}$ is Kirwan's partial desingularization which blows up the strictly semistable locus.
\end{rem}

Our interest henceforth is in the equal-weight Hassett compactification $\overline{M}^H_{0,2m}:=\overline{M}_{0,[\frac{1}{m}+\epsilon]_{2m}}$ and its morphism $\pi$ to $\overline{M}^{\text{GIT}}_{0,2m}:=\overline{M}_{0,[\frac{1}{m}]_{2m}}$. As the reader may easily check, the irreducible components of $\overline{M}^H_{0,2m}\sm M_{0,2m}$ are of two types, parametrizing\footnote{More precisely, it is a dense open subset of each component that parametrizes the displayed objects.} stable weighted curves as shown (up to reordering of the $\{p_i\}$):

\begin{center}
\begin{equation*}
\begin{tikzpicture}[scale=3]
\draw[-,line width=1.0pt] (-0.5,-0.5) -- (0.4,0.4); 
\fill (-0.4,-0.4) circle (0.75pt);
\node at (-0.55,-0.25) {$p_1=p_2$};
\fill (-0.2,-0.2) circle (0.75pt);
\node at (-0.35,-0.05) {$p_3$};
\node at (-0.15,0.15) {$...$};
\node at (0.05,0.35) {$p_n$};
\fill (0.2,0.2) circle (0.75pt);
\node at (0.05, -0.75) {Type (A)};
\end{tikzpicture}
\mspace{70mu}
\begin{tikzpicture}[scale=3]
\draw[-,line width=1.0pt] (-0.5,-0.5) -- (0.4,0.4);
\draw[-,line width=1.0pt] (0.9,-0.5) -- (0,0.4);
\fill (-0.4,-0.4) circle (0.75pt);
\node at (-0.55,-0.25) {$p_1$};
\fill (-0.25,-0.25) circle (0.75pt);
\node at (-0.4,-0.1) {$p_2$};
\node at (-0.25,0.05) {$...$};
\node at (-0.05,0.2) {$p_m$};
\fill (0.05,0.05) circle (0.75pt);
\fill (0.8,-0.4) circle (0.75pt);
\node at (0.95,-0.25) {$p_{m+1}$};
\fill (0.65,-0.25) circle (0.75pt);
\node at (0.8,-0.1) {$p_{m+2}$};
\node at (0.65,0.05) {$...$};
\node at (0.5,0.2) {$p_{2m}$};
\fill (0.35,0.05) circle (0.75pt);
\node at (0.2,-0.75) {Type (B)};
\end{tikzpicture}
\end{equation*}
\end{center}

\noindent It is also clear that $\pi$ preserves the type (A) strata whilst contracting the type (B) ones to a (strictly semistable) point parametrizing the object
\begin{center}
\begin{equation*}
\begin{tikzpicture}[scale=3]
\draw[-,line width=1.0pt] (-0.5,0) -- (0.9,0);
\fill (0.6,0) circle (0.75pt);
\fill (-0.2,0) circle (0.75pt);
\node at (-0.3,0.15) {$p_1=...=p_m$};
\node at (0.85,0.15) {$p_{m+1}=...=p_{2m}$};
\end{tikzpicture}
\end{equation*}
\end{center}

\noindent The $\CC$-VHSs $\V^{\zeta_m^j}$ admit canonical extensions across the smooth part of $\overline{M}^H_{0,2m}\sm M_{0,2m}$, and we shall now compute the LMHS types there.

\begin{prop}\label{D5}
Along type \textup{(}A\textup{)} strata:
\begin{itemize}[leftmargin=0.5cm]
\item $\V_{\lim}^{\zeta_m^j}$ is pure of weight $1$, with $h^{1,0}=2m-2j-1$ and $h^{0,1}=2j-1$, unless $j=\tfrac{m}{2}$;
\item if $j=\tfrac{m}{2}$, then $h^{1,1}=h^{0,0}=1$, $h^{1,0}=h^{0,1}=m-1$, and $T=e^N$ \textup{(}with $N$ an isomorphism from the $(1,1)$ to $(0,0$ part\textup{)}; and
\item if $j>\tfrac{m}{2}$ \textup{[}resp.~$<\tfrac{m}{2}$\textup{]}, then we have the decomposition $\V^{\zeta_m^j}_{\lim}=\V_{\lim,1}^{\zeta_m^j}\oplus \V^{\zeta_m^j}_{\lim,\bar{\zeta}_m^{2j}}$ into $T=T_{\text{ss}}$-eigenspaces, where $\V^{\zeta_m^j}_{\lim,\bar{\zeta}_m^{2j}}$ is $1$-dimensional of type $(0,1)$ \textup{[}resp.~$(1,0)$\textup{]}.
\end{itemize}
\end{prop}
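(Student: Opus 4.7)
The plan is to apply Corollary~\ref{A9} (the $g^*$-equivariant vanishing-cycle sequence) at a general type (A) boundary point, combined with Corollary~\ref{B7} for the eigenspectrum of the local singularity. First I would identify the local model: at a boundary point where $t_1=t_2$, the fiber $C_0$ acquires a single isolated singularity at $(x,y)=(t_2,0)$ with local equation $y^m=(x-t_2)^2 h(x)$, $h(t_2)\neq 0$, i.e.~an $A_{m-1}$ singularity. After absorbing $h$ into a local analytic coordinate change, the local model is $F(x,y)=y^m-x^2$ with the automorphism $y\mapsto\zeta_m y$, i.e.~the setup of Cor.~\ref{B7} with $(c_1,c_2,\ell)=(0,1,m)$. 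The Jacobian ideal $J_F=(x,y^{m-1})$ yields the monomial basis $\{y^i\}_{i=0}^{m-2}$ of $\CC[x,y]/J_F$, and for $\beta=(0,i)$ Cor.~\ref{B7} gives
\[
\alpha(\beta)=\tfrac{3}{2}-\tfrac{i+1}{m},\qquad \gamma(\beta)=\tfrac{i+1}{m},
\]
with $g^*$-eigenvalue $\zeta_m^{i+1}$. Thus for each $j\in\{1,\ldots,m-1\}$, $V_F^{\zeta_m^j}$ is one-dimensional (spanned by $\omega_{(0,j-1)}$), of Hodge type $(1,0)$ weight $1$ for $j<m/2$, type $(0,1)$ weight $1$ for $j>m/2$, and type $(1,1)$ weight $2$ when $j=m/2$ (only possible for $m$ even).

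Next I would compare the moduli monodromy with the standard Milnor-fiber monodromy. Setting $s:=t_1-t_2$ and $u:=x-t_2-s/2$, the family near the singularity becomes $y^m=h(t_2)(u^2-s^2/4)$, realizing the $s$-disc as a degree-$2$ base change of the Milnor smoothing $y^m-u^2=\tau$ via $\tau=-h(t_2)s^2/4$. Therefore $T_{\mathrm{mod}}=T_{\mathrm{loc}}^2$, so the moduli $T_{\mathrm{ss}}$-eigenvalue on $V_F^{\zeta_m^j}$ is $\be(\alpha(\beta))^2=\bar\zeta_m^{2j}$ --- equal to $1$ precisely when $j=m/2$.

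Taking the $\zeta_m^j$-eigenspace ($j\neq 0$) of the vanishing-cycle sequence, and using $H^2(C_0)^{\zeta_m^j}=0$ (since $g$ fixes the fundamental class of the irreducible $C_0$), I obtain
\[
0\to H^1(C_0)^{\zeta_m^j}\to \V_{\lim}^{\zeta_m^j}\to V_F^{\zeta_m^j}\to 0,
\]
with $T$ trivial on the sub. \textbf{Case $j\neq m/2$:} $T_{\mathrm{ss}}$ has distinct eigenvalues $1$ and $\bar\zeta_m^{2j}$ on the two pieces, so the sequence splits canonically as a sum of $T_{\mathrm{ss}}$-eigenspaces, giving the third bullet; both pieces are pure of weight $1$, and matching totals against \eqref{D1} yields the Hodge numbers of the first bullet. \textbf{Case $j=m/2$:} $T_{\mathrm{ss}}=1$ on $\V_{\lim}^{-1}$ so $T=e^N$; the weight-$2$ type-$(1,1)$ class from $V_F^{-1}$, together with the monodromy-weight-filtration condition for a weight-$1$ VHS, forces an isomorphism $N\colon \gr^W_2\xrightarrow{\sim}\gr^W_0$, and hence a one-dimensional type-$(0,0)$ summand. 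This class is supplied by $W_0 H^1(C_0)^{-1}$: the local factorization $y^m-x^2=(y^{m/2}-x)(y^{m/2}+x)$ exhibits two analytic branches, producing a single loop in the dual graph of (the semi-normalization of) $C_0$; the involution $y^{m/2}\mapsto -y^{m/2}$ induced by $g$ swaps the two branches and so acts by $-1$ on $W_0H^1(C_0)$, placing this class in the $(-1)$-eigenspace as needed.

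The main obstacle is the case $j=m/2$: identifying the weight-$0$, type-$(0,0)$ class in $\V_{\lim}^{-1}$ and verifying that it sits in the $(-1)$-eigenspace of $g^*$. This rests on (a) the dual-graph description of $W_0 H^1(C_0)$ for the non-nodal $A_{m-1}$ singularity, accessible via the semi-normalization that replaces it by an ordinary double point with two smooth transverse branches, and (b) the orientation-reversing action of $g$ on the loop. The base-change factor $T_{\mathrm{mod}}=T_{\mathrm{loc}}^2$ is equally essential: it converts the local Milnor eigenvalue $-\zeta_m^{-j}$ into $\bar\zeta_m^{2j}$, and is exactly what renders $T_{\mathrm{ss}}$ trivial precisely at $j=m/2$.
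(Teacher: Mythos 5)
Your proof is correct and takes essentially the same route as the paper: compute the eigenspectrum of the local $A_{m-1}$ singularity via Cor.~\ref{B7}, apply the $g^*$-equivariant vanishing-cycle sequence of Cor.~\ref{A9} (noting $H^2_{\text{ph}}=0$ since $C_0$ stays irreducible), and account for the degree-$2$ base change $\tau = -h(t_2)s^2/4$ that squares the $T_{\text{ss}}$-eigenvalues, making $T_{\text{ss}}$ trivial exactly at $j=m/2$. You are more explicit than the paper in two places, both of which are correct and useful: (a) the derivation of $T_{\mathrm{mod}}=T_{\mathrm{loc}}^2$ via the coordinate change $u=x-t_2-s/2$, which the paper only asserts (``we perform a base-change by $s\mapsto s^2$ to preserve ordering of points''); and (b) the identification, for $m$ even and $j=m/2$, of the weight-$0$ class in $\V_{\lim}^{-1}$ as coming from $W_0H^1(C_0)^{-1}$, using the two local branches $y^{m/2}=\pm x$ and the fact that $g$ swaps them. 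The branch computation is a nice independent check, though you correctly note the monodromy-weight-filtration condition ($N\colon\gr^W_2\xrightarrow{\sim}\gr^W_0$ for a weight-$1$ limit) already forces the conclusion; in fact it also shows directly that $W_0H^1(C_0)^{\zeta_m^j}=0$ for $j\neq m/2$, so the branch argument is strictly supplementary. One small observation: when you chase the Hodge numbers through (D1) for $j=m/2$ you get $h^{1,0}=h^{0,1}=m-2$ (so that the total rank is $2m-2$), not $m-1$ as printed in the statement of Prop.~\ref{D5}; the printed value appears to be a typo, and your computation is the correct one.
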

\begin{proof}
Begin by locally modeling (the effect on $C_{\ut}$ of) the collision of two points by $y^m+z^2=s$, as $s\to 0$. This has eigenspectrum
$$\textstyle\sum_{j=1}^{m-1}(\tfrac{3}{2}-\tfrac{j}{m},w(j),\tfrac{j}{m}),$$
where $w(j)=2$ if $j=\tfrac{m}{2}$ and $1$ otherwise.  Next, we apply the vanishing-cycle sequence (with $H^2_{\text{ph}}=\{0\}$ since the degenerate curve remains irreducible) to compute the LMHS. Finally, we perform a base-change by $s\mapsto s^2$ to preserve ordering of points, which squares the eigenvalues of the $T_{\text{ss}}$-action; in other words, we replace $\tfrac{3}{2}-\tfrac{j}{m}$ by $\{2(\tfrac{3}{2}-\tfrac{j}{m})\}+\lfloor \tfrac{3}{2}-\tfrac{j}{m}\rfloor$ ($\{\cdot\}$ denoting the fractional part), which gives the result.
\end{proof}

\begin{prop}\label{D6}
Along the type \textup{(}B\textup{)} strata, for each $1\leq j\leq m-1$, $\V^{\zeta_m^j}_{\lim}$ has Hodge numbers $h^{1,1}=h^{0,0}=1$, $h^{1,0}=2m-2j-2$, and $h^{0,1}=2j-2$; $N$ is an isomorphism from the $(1,1)$ to $(0,0)$ part, and $T=e^N$ is unipotent.
\end{prop}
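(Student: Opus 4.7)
The plan is to parallel the proof of Proposition~\ref{D5}, now modeling the collision of $m$ points at each of the two clusters by the local Brieskorn--Pham smoothing $F=y^m+z^m=s$, which has smooth total space with a single singularity at the origin of its central fiber. First I would apply Example~\ref{B8} and Corollary~\ref{B7}: with the local $g$-action $(z,y)\mapsto(z,\zeta_m y)$ one gets $\gamma(\ubb)=(\beta_2+1)/m$, so the $\zeta_m^j$-eigenspace of $g^*$ corresponds to $\beta_2=j-1$ with $\beta_1\in\{0,\ldots,m-2\}$ and $\alpha(\ubb)=2-\tfrac{\beta_1+j+1}{m}$. The unique value $\beta_1=m-j-1$ produces $\alpha=1\in\ZZ$, contributing a single $(1,1)$ piece; the $m-j-1$ values $\beta_1<m-j-1$ yield $(1,0)$ pieces, and the $j-1$ values $\beta_1>m-j-1$ yield $(0,1)$ pieces. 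Summing over both clusters, $V^{\zeta_m^j}:=V_{x_1}^{\zeta_m^j}\oplus V_{x_2}^{\zeta_m^j}$ will have Hodge numbers $(h^{1,0},h^{1,1},h^{0,1})=(2m-2j-2,\,2,\,2j-2)$.

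Next I would analyze the degenerate curve $C_0=\{y^m=x^m(x-1)^m\}=\bigcup_{k=0}^{m-1}\mathcal{E}_k$, a union of $m$ smooth rational components $\mathcal{E}_k=\{y=\zeta_m^k x(x-1)\}$ meeting pairwise at the two cluster points, with $g$ cyclically shifting $\mathcal{E}_k\mapsto\mathcal{E}_{k-1}$. Feeding the normalization $\nu\colon\tilde C_0=\bigsqcup_k\mathcal{E}_k\to C_0$ into
\[
0\to\QQ_{C_0}\to\nu_*\QQ_{\tilde C_0}\to\mathcal{S}\to 0
\]
(with $\mathcal{S}$ a skyscraper of rank $m-1$ at each singularity, carrying the regular-minus-trivial $\mu_m$-representation) will give $H^1(C_0)\cong\QQ^{m-1}$, pure of weight $0$ and regular-minus-trivial as a $\mu_m$-representation. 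Dually, $H^2_{\mathrm{ph}}(C_0)=\ker\bigl(H^2(C_0)=\QQ^m\twoheadrightarrow H^2(C_s)=\QQ\bigr)\cong\QQ^{m-1}$ will be pure of type $(1,1)$ with the same character, so $H^1(C_0)^{\zeta_m^j}$ and $H^2_{\mathrm{ph}}(C_0)^{\zeta_m^j}$ are each $1$-dimensional, of Hodge types $(0,0)$ and $(1,1)$ respectively.

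I would then feed this into the $\zeta_m^j$-eigenspace of Corollary~\ref{A9},
\[
0\to H^1(C_0)^{\zeta_m^j}\to \V^{\zeta_m^j}_{\lim}\overset{\can}{\to}V^{\zeta_m^j}\overset{\delta}{\to}H^2_{\mathrm{ph}}(C_0)^{\zeta_m^j}\to 0,
\]
and note that $\delta$, being MHS-compatible into a pure $(1,1)$, vanishes on the $(1,0)$- and $(0,1)$-parts of $V^{\zeta_m^j}$ and must surject from its $2$-dimensional $(1,1)$-part. The resulting $\ker(\delta)^{\zeta_m^j}$ has Hodge numbers $(2m-2j-2,\,1,\,2j-2)$, and extending by the pure-$(0,0)$ piece $H^1(C_0)^{\zeta_m^j}$ yields exactly the Hodge diamond claimed for $\V^{\zeta_m^j}_{\lim}$. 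Since $h^{2,0}=h^{0,2}=0$ for the LMHS of a weight-$1$ VHS, $\mathrm{Gr}^W_2$ and $\mathrm{Gr}^W_0$ will both be $1$-dimensional (of types $(1,1)$ and $(0,0)$), and the standard monodromy-weight isomorphism then forces $N$ to restrict to the required iso $(1,1)\to(0,0)$.

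To finish with unipotency of $T$, I would follow \ref{D5} and perform the base change $s\mapsto s^m$ needed to preserve the ordering of the $m$ colliding points on each side; this raises $T_{\mathrm{ss}}$-eigenvalues to the $m$-th power. Since the eigenvalues on $V^{\zeta_m^j}$ are $m$-th roots of unity (visible from the spectrum above) and $T_{\mathrm{ss}}$ already acts trivially on both end terms of the vanishing-cycle sequence, this will force $T_{\mathrm{ss}}=1$ throughout $\V^{\zeta_m^j}_{\lim}$, hence $T=e^N$. The main delicate point, paralleling the analogous step of \ref{D5}, will be to verify that the correct exponent is indeed $m$ rather than a smaller power---equivalently, that the Hassett-local coordinate transverse to the type~(B) divisor is an $m$-th root of the local-model parameter (consistent with the discriminant of the $m$ colliding points scaling as $\tilde s^{m(m-1)}$ against the local model's $s^{m-1}$).
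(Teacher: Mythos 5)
Your proposal is correct and follows essentially the same route as the paper: model each cluster by the Brieskorn--Pham singularity $y^m+x^m=s$, read off the eigenspectrum via Corollary~\ref{B7}/Example~\ref{B8}, feed it into the $\zeta_m^j$-eigenspace of the vanishing-cycle sequence (with the degenerate curve a union of $m$ $\PP^1$'s contributing $H^1(C_0)\cong\QQ^{m-1}$ of weight $0$ and $H^2_{\mathrm{ph}}\cong\QQ(-1)^{\oplus(m-1)}$, both regular-minus-trivial as $\mu_m$-representations), and then base-change by $s\mapsto s^m$ to trivialize $T_{\mathrm{ss}}$. Your extra verifications -- the normalization-sequence computation of $H^1(C_0)$, the character-theoretic identification of each eigenspace, and the discriminant-scaling heuristic pinning down the exponent $m$ -- fill in details the paper leaves implicit, and are all sound.
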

\begin{proof}
In the GIT compactification for \emph{unordered} points, the degeneration is locally modeled by two copies of $y^m+x^m=s$, each with eigenspectrum
$$\textstyle\sum_{j=1}^{m-1}(1,2,\tfrac{j}{m})+\sum_{j=2}^{m-1}\sum_{k=1}^{j-1}(\tfrac{k+m-j}{m},1,\tfrac{j}{m})+\sum_{j=1}^{m-2}\sum_{k=j+1}^{m-1}(\tfrac{k+m-j}{m},1,\tfrac{j}{m}).$$
At this point one applies the vanishing-cycle sequence to deduce the form of the LMHS, noting that the degenerate curve is a union of $m$ $\PP^1$'s and $H^2_{\text{ph}}\cong \QQ(-1)^{\oplus m-1}$. For $\overline{M}^H_{0,2m}$, one then applies the base-change by $s\mapsto s^m$, which trivializes $T_{\text{ss}}$, allowing the extension class to vary along the type (B) stratum.
\end{proof}

\begin{example}\label{D6a}
Combining \eqref{D1} with the two Propositions, $\V^{\bar{\zeta}_m}$ has Hodge-Deligne diagrams

\begin{align*}
\begin{tikzpicture}[scale=0.5]
   	\node at (-0.8,2) (foo) {$1$};
    \path[->] (foo) edge  [loop left] node {$T_{\text{ss}}=\zeta_m^2$} ();
    \draw[-,line width=1.0pt] (0,0) -- (0,3);
    \draw[-,line width=1.0pt] (0,0) -- (3,0);
	\fill (2,0) circle (5pt);
	\node at (2,-0.5)  {$1$};		
	\fill (-0.3,2) circle (5pt);
	\fill (0.3,2) circle (5pt);	
	\node at (1.6,2) {\tiny$2m-4$};
	\node at (1,-2) {type (A)};	\end{tikzpicture}
& 
\begin{tikzpicture}[scale=0.5]
\draw[->,line width=1.0pt] (1.5,3.5) -- (0,3.5);
\node at (0.75,2.8) {$\lim$};
\node at (0,0) {\phantom{hh}};
\end{tikzpicture} 
&
\begin{tikzpicture}[scale=0.5]
    \draw[-,line width=1.0pt] (0,0) -- (0,3);
    \draw[-,line width=1.0pt] (0,0) -- (3,0);
	\fill (2,0) circle (5pt);
	\fill (0,2) circle (5pt);		
	\node at (2,-0.5) {$1$};
	\node at (1.3,2) {\tiny$2m-3$};
	\node at (1,-2) {\phantom{$\V^{\bar{\zeta}_m}$}};				
\end{tikzpicture}
& 
\begin{tikzpicture}[scale=0.5]
\draw[->,line width=1.0pt] (0,3.5) -- (1.5,3.5);
\node at (0.75,2.8) {$\lim$};
\node at (0,0) {\phantom{hh}};
\end{tikzpicture} 
&
\begin{tikzpicture}[scale=0.5]
   	\node at (1.6,0.8) (foo) {$N$};
    \draw[-,line width=1.0pt] (0,0) -- (0,3);
    \draw[-,line width=1.0pt] (0,0) -- (3,0);
    \draw[->,line width=1.0pt] (1.7, 1.7) -- (0.2,0.2);
	\fill (0,2) circle (5pt);
	\fill (0,0) circle (5pt);	
	\node at (2.5,2.5) {$1$};	
	\fill (2,2) circle (5pt);
	\node at (-1.4,2) {\tiny$2m-4$};		\node at (-0.5,-0.5) {$1$};
	\node at (1,-2) {type (B)};
\end{tikzpicture}
\end{align*}

\noindent For $m=4$ resp.~$6$, the monodromy in type (A) is thus given by a complex reflection resp.~``triflection''.
\end{example}

\begin{rem}\label{D7}
For any $m$, $\V^{\bar{\zeta}_m}\,(\oplus \V^{\zeta_m})$ induces a map from the universal cover $\widetilde{M}_{0,2m}^{\text{un}}$ to a ball $\BB_{2m-3}$. Moreover, both LMHS types have $2m-4$ complex moduli. However, for $m$ different from $2,3,4,$ or $6$, this does not lead to a tidy extended period map: as the projection of the monodromy to $U(1,2m-3)$ is not discrete \cite{Mo}, the quotient of $\BB_{2m-3}$ by this is not Hausdorff. 

To circumvent this problem, we must replace $\BB_{2m-3}$ by its product with other (non-ball) symmetric domains, which receives the image of the period map for the $\QQ$-VHS $\oplus_{(j,m)=1}\V^{\zeta_m^j}$.  For instance, if $m=5$ then the real points of the generic Mumford-Tate group of $\V$ take the form $U(1,7)\times U(3,5)$, and the full period map lands in a discrete quotient of the product $\BB_7\times \mathrm{I}_{3,5}$.
\end{rem}

%%%%%%%%%%%%%%%%%%%%%%%%%%%%%%%%%%%%%%
%%%%%%%%%%%%%%%%%%%%%%%%%%%%%%%%%%%%%%
\section{Hyperplane configurations and Dolgachev's conjecture}\label{S5}

Both differential and asymptotic methods in Hodge theory can be used to establish that a VHS is ``generic'' in some sense.  In \cite{GSVZ}, differential methods (characteristic varieties and Yukawa couplings) were employed to show that the period map for the family of CY 3-folds $X\overset{2:1}{\twoheadrightarrow}\PP^3$ branched over 8 planes does not factor through a locally symmetric variety of the form $\Gamma\backslash \mathrm{SU}(3,3)/K$.  Indeed, the geometric monodromy and Mumford-Tate groups of the corresponding VHS turn out to be as large as they can be (with both equal to the symplectic group $\mathrm{Sp}_{20}$).  This was later extended to similarly constructed CY $n$-fold families  \cite{SXZ}, see below.  Our goal here is to quickly deduce these results using eigenspectra and local monodromy, demonstrating the effectiveness of the asymptotic approach.

Let $L_0,\ldots,L_{2n+1}\subset \PP^n$ be hyperplanes defined by linear forms $\ell_i$, in general position in the sense that $\cup L_i$ is a normal crossing divisor. Consider the $2{:}1$ cover $X\overset{\pi}{\twoheadrightarrow}\PP^n$ branched along $\cup L_i$, and the rank-$1$ $\QQ$-local system $\LL$ on $U=\PP^n\sm(\cup L_i)\overset{\jmath}{\hookrightarrow}\PP^n$ with monodromy $-1$ about each $L_i$. Since $X$ has finite quotient singularities, we have $\mathrm{IC}^{\bullet}_X=\QQ_X[n]$ and\footnote{See \cite[Prop.~8.2.30]{HTT} for the statement that $\mathrm{IC}^{\bullet}_{\PP^n}\LL=\jmath_*\LL[n]$.}
\begin{equation}\label{E1}
H:=H^n_{\pr}(X):=\frac{H^n(X)}{\pi^*H^n(\PP^n)}\cong H^n(\PP^n,\jmath_*\LL)\cong \IH^n(\PP^n,\LL)
\end{equation}
is a pure HS of weight $n$. By \cite[Lemma 8.2]{DK}, it has Hodge numbers
\begin{equation}\label{E2}
h^{p,n-p}_{\pr}(X)=\binom{n}{p}^2\;\;\implies\;\;h^n_{\pr}(X)=\binom{2n}{n}.	
\end{equation}
It is polarized by the intersection form $Q$, which presents no difficulties as $X$ has a smooth finite cover.

Taking $\cs\subset (\check{\PP}^n)^{2n+2}/\mathrm{PGL}_{n+1}(\CC)=:\csb$ to be the ($n^2$-dimensional) moduli space of $2n+2$ ordered hyperplanes in $\PP^n$ in general position, this construction yields a $\ZZ$-PVHS $\H\to \cs$ of CY-$n$ type with $H$ as reference fiber.  Let $\rho\colon \pi_1(\cs)\to \mathrm{Aut}(H,Q)^{\circ}=:M_{\max}$ be the monodromy representation of $\H$,\footnote{Here $(\cdot)^{\circ}$ means the identity component as algebraic group (i.e.~$\mathrm{SO}(H)$ instead of $O(H)$ if $n$ is even).} $\Pi$ its geometric monodromy group, and $M$ its Hodge (special Mumford-Tate) group. Here $\Pi$ is the identity connected component of $\widetilde{\Pi}:=\overline{\rho(\pi_1(\cs))}^{\QQ\text{-Zar}}$, and $\Pi\leq M\leq M_{\max}$. A conjecture attributed by \cite{SXZ} to Dolgachev states that the period map for $\H$ factors through a locally symmetric variety (also $n^2$-dimensional) of type $I_{n,n}$,\footnote{Note that the ``tautological VHS'' over $I_{n,n}$ is already geometrically realized by the $n^{\text{th}}$ primitive cohomology of a universal family of Weil abelian $2n$-folds.} which would imply that $\mathfrak{m}_{\RR}\cong \mathfrak{su}(n,n)$. This is equivalent to saying that,
\begin{equation}\label{E3}
\begin{split}
\text{\emph{up to finite data} (i.e.~after passing to a finite cover),}\phantom{hhh}	\\ \text{\emph{$\H$ is the $n^{\text{th}}$ wedge power of a VHS of weight $1$ and rank $2n$.}}
\end{split}
\end{equation}

The conjecture does hold for $n=1$ and $n=2$, but this merely reflects exceptional isomorphisms of Lie groups in low rank, namely $\mathrm{SU}(1,1)\cong \mathrm{SL}_2(\RR)$ and $\mathrm{SU}(2,2)\cong\mathrm{Spin}(2,4)^+$. That is, in both of these cases we \emph{also} have $\Pi\cong M_{\max}$ ($=\mathrm{SL}_2$ resp.~$\mathrm{SO}(2,4)$). For $n\geq 3$, in contrast, the conjecture would have $\Pi<M_{\max}$ a proper algebraic subgroup. In [op.~cit.] (and earlier works \cite{GSVZ,GSZ,GSZ2}), it was shown via quite computationally involved differential methods that in fact the monodromy is maximal for all $n$, and the conjecture fails for $n\geq 3$:

\begin{thm}\label{E4}
$\Pi=M=M_{\max}$ $\forall n\geq 1$.
\end{thm}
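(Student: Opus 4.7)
My plan is to produce sufficiently many Picard--Lefschetz elements (symplectic transvections when $n$ is odd, orthogonal reflections when $n$ is even) inside $\Pi$, by analyzing nodal degenerations of the auxiliary smooth hypersurface $Y\subset \PP(\co_{\PP^{2n+1}}(2)^{\oplus(n+1)})$ introduced in \S\ref{S5}. Recall that $Y$ carries an action of a group $G\cong (\ZZ/2\ZZ)^{2n}$, and $\H$ appears as a specific $G$-isotypical component of the VHS attached to the universal family of such $Y$'s over $\cs$. The game is to use the $G$-eigenspectrum formalism of \S\S\ref{S1}--\ref{S2} to produce nonzero vanishing cycles inside the $\H$-isotype, and then to upgrade the resulting transvections or reflections to all of $M_{\max}$.

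First I would single out a codimension-one component $D\subset\csb\sm\cs$ along which $Y$ acquires a $G$-orbit $\Sigma$ of ordinary double points --- the natural candidate is the locus where $n+2$ of the $2n+2$ hyperplanes acquire a common point, so that $X$ gains a single node and $Y$ gains a $G$-orbit of $A_1$'s of size determined by the stabilizer. Applying Corollary \ref{B7} to each $y\in \Sigma$, the eigenspectrum $\sigma^G_{f,y}$ decomposes the vanishing cohomology $\bigoplus_y V_y$ into simultaneous $(T_{\mathrm{ss}},G)$-eigencomponents, and one should read off that the part lying in the $\H$-isotype is concentrated in Hodge type $(\tfrac{n+1}{2},\tfrac{n+1}{2})$ for odd $n$, resp.\ $(\tfrac n2+1,\tfrac n2)\oplus (\tfrac n2,\tfrac n2+1)$ for even $n$, and is nonzero. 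This is the input that a variant of Schoen's argument \cite{Sc} converts into a nonzero vanishing cycle $\delta\in H$ of the expected Hodge type.

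Next, the local monodromy around $D$ acts on $H$ by Picard--Lefschetz as $v\mapsto v\pm Q(v,\delta)\delta$, giving a transvection (resp.\ reflection) inside $\Pi$. Conjugating by the $\mathrm{PGL}_{n+1}(\CC)$-action on ordered hyperplane configurations and by loops around other boundary components produces a full $\Pi$-orbit $\{\delta_i\}$ of such vanishing cycles. A classical result going back to Beauville and Deligne--Katz asserts that a Zariski-closed subgroup of $\mathrm{Sp}(H,Q)$ (resp.\ $\mathrm{SO}(H,Q)$) that acts $\QQ$-irreducibly on $H$ and is generated by transvections (resp.\ reflections) along a spanning family of vectors coincides with the ambient classical group. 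The irreducibility hypothesis should follow from the Hodge-number pattern \eqref{E2} together with the IC description $H\cong \IH^n(\PP^n,\LL)$ and the transitivity of $\mathfrak{S}_{2n+2}\subset \Pi$ on the branch hyperplanes. Once $\Pi=M_{\max}$, the sandwich $\Pi\leq M\leq M_{\max}$ forces $M=M_{\max}$ as well.

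The hard part will be verifying that the family $\{\delta_i\}$ of vanishing cycles actually spans $H$. Non-vanishing of each individual $\delta_i$ is handed to us by Corollary \ref{B7}, but spanning requires either (i) a direct dimension count linking the number of nodal boundary strata (and their $\mathrm{PGL}_{n+1}$-translates) to $\dim H=\binom{2n}{n}$, or (ii) a bootstrapping argument identifying $\mathrm{span}\{\delta_i\}$ as a nontrivial $\Pi$-subrepresentation and then invoking the irreducibility of $H$. Route (ii) is cleaner provided irreducibility can be proved upstream of the maximality statement, e.g.\ by recognizing $\H$ as an irreducible summand of the $\mathfrak{S}_{2n+2}$-equivariant IC-sheaf $\jmath_*\LL[n]$; route (i) is more explicit but requires examining several combinatorial degeneration types of the hyperplane configuration to ensure enough independent transvections are collected. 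I expect either route to succeed, but making route (ii) rigorous --- without circularly assuming what we want to prove --- is the main technical hurdle.
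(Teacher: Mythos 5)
Your high-level strategy --- produce Picard--Lefschetz elements from nodal degenerations of $Y$, transport them around moduli, and invoke a Deligne/Beauville classification of groups generated by transvections or reflections --- is exactly the strategy of the paper. However there are three substantive gaps in your proposal.

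First, the degeneration locus is wrong. A codimension-one component of $\csb\sm\cs$ corresponds to $n+1$ of the hyperplanes acquiring a common point, not $n+2$: with $n$ hyperplanes meeting at a point generically, forcing an $(n+1)$st through that point is one condition in $\check{\PP}^n$, while $n+2$ through a point is codimension two. Moreover with $n+1$ hyperplanes through a point the local model is $w^2=x_1\cdots x_n(t-\sum x_i)$, which is \emph{not} an $A_1$ singularity on $X$ (it is not even isolated for $n\geq 2$). That is precisely why the paper passes to $\hat X$ and then to $Y$ via the Cayley trick: only there do the singularities become a clean $(\ZZ/2)^n$-orbit of nodes.

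Second, the claim that ``non-vanishing of each individual $\delta_i$ is handed to us by Corollary \ref{B7}'' is false. Corollary \ref{B7} computes the eigenspectrum of the vanishing cohomology $V_y$, which is always nonzero for an isolated singularity; it says nothing about whether the image of the vanishing cycle class in $H=H^{3n}(Y_t)^{\chi}(n)$ is nonzero, i.e.\ whether the connecting map $\delta^{\chi}$ in the $\chi$-isotypic vanishing-cycle sequence \eqref{E11} vanishes. For $n$ even this is automatic from weight and $T_{\mathrm{ss}}$-eigenvalue considerations, but for $n$ odd it genuinely requires work: the paper invokes \cite[Thm.~2.9]{KL2}, reducing the question to nonvanishing of an evaluation map, and exhibits the explicit $\chi$-eigensection \eqref{E12} of $K_{\mathbf{P}}(\tfrac{3n+1}{2}Y_\sigma)$ that does not vanish at the nodes. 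Without this step your transvections could be trivial and the whole argument collapses, so flagging it as ``handed to us'' is a genuine gap.

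Third, for the spanning step the paper avoids irreducibility entirely; it gives an explicit basis of $H$ consisting of $\binom{2n}{n}$ double simplices $\nu_I$ (branched along $L_s$ and $n$ of the $L_i$) and shows these are all in a single $\Gamma$-orbit by analyzing $T_I T_{I'}^{\pm 1}$ when $|I\cap I'|=n-1$, after which Deligne's Lemme~4.4.2 applies. Your route (ii) of proving $\QQ$-irreducibility upstream and bootstrapping is not clearly non-circular: irreducibility of $\IH^n(\PP^n,\LL)$ as a $\Pi$-module is not the same as irreducibility of $\jmath_*\LL[n]$ as a perverse sheaf, and the $\mathfrak{S}_{2n+2}$-action on cohomology is an \emph{a priori} separate symmetry, not contained in the monodromy group $\Pi$ over $\cs$ (only over the unordered configuration space). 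The paper's direct count is both cleaner and unconditional, and you should expect route (i), done carefully, to be the way through.
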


In the remainder of this section, we explain how asymptotic methods provide a much simpler approach to these results.  First we will give a careful argument disproving the conjecture for $n\geq 3$ odd, which \emph{a priori} is a weaker statement than the Theorem in that case. (The relation to the main theme of his paper --- specifically, to the setting of Cor.~\ref{A9} --- enters when we pass to the smooth finite cover $\hat{X}$ of $X$.) Then we sketch a proof of Theorem \ref{E4} using a more topological and monodromy-theoretic approach.

\subsection*{Disproof of \eqref{E3} for $n$ odd}

Most of the analysis that follows works for all $n$, though the last step is inconclusive for even $n$.

To begin, consider a pencil $\PP^1\overset{\ve}{\hookrightarrow}\csb$ of hyperplane configurations given by fixing $L_0,\ldots,L_{2n}$ (in general position) and letting $L_{2n+1}:=L_s$ vary along a line in $\check{\PP}^n$ (chosen to avoid linear spans of any $n-2$ $L_i$ in $\check{\PP}^n$).\footnote{It already follows from Zariski's theorem \cite[Thm.~3.22]{Vo} that $\rho(\pi_1(\PP^1\sm\Sigma))=\rho(\pi_1(\cs))$, but we won't need this.} Writing $\Sigma=\ve^{-1}(\csb\sm\cs)$, we have $|\Sigma|=\binom{2n+1}{n}$; and degenerations $\X_{\sigma}\to \Delta_{\sigma}$ of our double-covers at $\sigma\in \Sigma$ are locally modeled (with $t=s-\sigma$) by 
\begin{equation}\label{E5}
w^2\underset{\text{loc}}{=}x_1\cdots x_n(t-x_1-\cdots -x_n)	
\end{equation}
after a $\mathrm{PGL}_{n+1}(\CC)$-action. Accordingly, writing $X_0,\ldots,X_n$ for projective coordinates on $\PP^n$, we take $\ell_i=X_i$ for $0\leq i\leq n$ and $\ell_{2n+1}=tX_0-\sum_{i=1}^n X_i$, and $\ell_{n+1},\ldots,\ell_{2n}$ ``general''.

Let $\ul\colon \PP^n\hookrightarrow \PP^{2n+1}$ denote the linear embedding $[X_0{:}\cdots{:}X_n]\mapsto [\ell_0(\UX){:}\cdots {:}\ell_{2n+1}(\UX)]$, and $\phi\colon \PP^{2n+1}\to \PP^{2n+1}$ denote the map sending $[Z_0{:}\cdots {:}Z_{2n+1}]\mapsto [Z_0^2{:}\cdots {:}Z_{2n+1}^2]$. Then $\hat{X}:=\phi^{-1}(\ul(\PP^n))\subset \PP^{2n+1}$ is a smooth complete intersection on which\footnote{Here $\NP$ denotes the diagonal embedding.} $\A:=(\ZZ/2\ZZ)^{2n+2}/\NP(\ZZ/2\ZZ)$ acts via $\ue^{(i)}\mapsto \{Z_i\mapsto -Z_i\}$, with quotient $\PP^n$; explicitly, we have
\begin{equation}\label{E6}
\hat{X}=\textstyle\bigcap_{k=0}^n\{0=F_k(\UZ):=-Z_{n+k+1}^2+\ell_{n+k+1}(Z_0^2,\ldots,Z_n^2)\}.
\end{equation}
Write $\chi\in \mathrm{X}^*(\A)$ for the character sending each $\ue^{(i)}\mapsto -1$, $\A^{\circ}:=\ker(\chi)\leq \A$, and $q\colon \hat{X}\twoheadrightarrow X$ for the quotient by $\A^{\circ}$; then $H\cong q^* H^n_{\pr}(X)\cong H^n(\hat{X})^{\chi}$. Since $F_0(\UZ)=tZ_0^2-\sum_{i=1}^{n+1}Z_i^2$, we have thus replaced our original \emph{non}-isolated degeneration \eqref{E5} by a nodal one.

Next, we use the ``Cayley trick'' to replace the complete intersection $\hat{X}$ by a hypersurface
\begin{equation}\label{E7}
Y:=\{0=F:=\textstyle\sum_{k=0}^n Y_k F_k(\UZ)\}\subset \PP\left(\co_{\PP^{2n+1}}(2)^{\oplus n+1}\right)=:\mathbf{P}
\end{equation}
of dimension $3n$.  We have an $\A$-equivariant isomorphism $H^{3n}(Y)(n)\cong H^n(\hat{X})$ of HSs, so that $H\cong H^{3n}(Y)^{\chi}(n)$.  Notice that in affine coordinates $(z_1,\ldots,z_{2n+1};y_1,\ldots,y_n)$, $F=0$ becomes\footnote{Here ``h.o.t.'' means terms vanishing to order $3$ at the nodes.}
\begin{equation}\label{E8}
0=t-z_1^2-\cdots - z_{n+1}^2+\textstyle \sum_{k=1}^n y_k(b_k-z_{n+k+1})(b_k+z_{n+k+1})+\text{h.o.t},	
\end{equation}
where $b_k:=\sqrt{F_k(1,0,\ldots,0)}$. So at $t=0$, the singular fiber $Y_{\sigma}$ has $2^n$ nodes at $(Z_0;Z_1,\ldots,Z_{n+1};Z_{n+2},\ldots,Z_{2n+1};Y_0;Y_1,\ldots,Y_n)=$
\begin{equation}\label{E9}
\left(1;0,\ldots,0;(-1)^{a_1} b_1,\ldots,(-1)^{a_n}b_n;1;0,\ldots,0\right),\;\;\;\ua\in (\ZZ/2\ZZ)^n,	
\end{equation}
and the degeneration $\Y_{\sigma}\to \Delta_{\sigma}$ has smooth total space. The mixed spectrum of each node is $[(\tfrac{3n+1}{2},3n+1)]$ for $n$ odd and $[(\tfrac{3n+1}{2},3n)]$ for $n$ even; so $T_{\sigma}$ acts through multiplication by $(-1)^{n+1}$ on
\begin{equation}\label{E10}
H^{3n}_{\van}(Y_t)\cong \QQ(-\lfloor \tfrac{3n+1}{2} \rfloor)^{\oplus 2^n}.	
\end{equation}
Moreover, since the summands of \eqref{E10} are represented by $\eta_{\ua}=(-1)^{|\ua|}(dz_1\wedge \cdots \wedge dz_{2n+1}\wedge dy_1\wedge\cdots \wedge dy_n)/F^{\lceil\frac{3n+1}{2}\rceil}$ near the nodes \eqref{E9} (in the sense of \cite[\S2]{KL2}), it has a 1-dimensional subspace (generated by $\eta_{\chi}:=\sum (-1)^{|\ua|}\eta_{\ua}$) on which $\A$ acts through $\chi$.

Taking $\chi$-eigenspaces of the vanishing-cycle sequence for $\Y_{\sigma}\to \Delta_{\sigma}$ and twisting by $\QQ(n)$ now yields\small
\begin{equation}\label{E11}
0\to H^{3n}(Y_{\sigma})^{\chi}(n)\overset{\sp^{\chi}}{\to} \underset{\cong H_{\lim}}{\underbrace{H^{3n}_{\lim}(Y_t)^{\chi}(n)}}\overset{\can^{\chi}}{\to}\QQ(-\lfloor\tfrac{n+1}{2}\rfloor)\overset{\delta^{\chi}}{\to} H^{3n+1}_{\text{ph}}(Y_{\sigma})^{\chi}(n)\to 0	
\end{equation}\normalsize
We claim that $\delta=0$.  For $n$ even, this is clear, since $T_{\sigma}$ acts trivially on $H^{3n+1}_{\text{ph}}(Y_{\sigma})$ and by $-1$ on $\QQ(-\lfloor \tfrac{n+1}{2}\rfloor)$. So we conclude that $T_{\sigma}$ acts on $H_{\lim}$ via an orthogonal reflection. This doesn't factor through $\bigwedge^n$ of any automorphism of $\CC^{2n}$, but because it is finite (of order $2$), this does not (yet) disprove the conjecture.

On the other hand, for $n$ odd, it is not automatic that $\delta=0$. (This is a well-known problem with nodal degerenations in odd dimensions, cf.~\cite[\S2.2]{KL2}; and as we saw in the proof of Lemma \ref{E5}, our degenerations are finite quotients of nodal ones.) But if we can show $\delta=0$, then the conjecture is \emph{immediately} disproved (for odd $n\geq 3$).  Here is why: by \eqref{E6}, $H_{\lim}$ then has a class of type $(n+1,n+1)$, which must go to an $(n,n)$ class by $N_{\sigma}$,
\begin{center}
\begin{equation*}
	\begin{tikzpicture}[scale=0.3]
   	\node at (3.8,2.7) (foo) {\tiny$N_{\sigma}$};
    \draw[->,line width=1.0pt] (-1,-1) -- (-1,8);
    \draw[->,line width=1.0pt] (-1,-1) -- (8,-1);
    \draw[->,line width=1.0pt] (3.7, 3.7) -- (2.2,2.2);
	\fill (4,2) circle (5pt);
	\fill (2,4) circle (5pt);	
	\node at (1.5,2.5) {\tiny$1$};
	\fill (2,2) circle (5pt);
	\fill (4,4) circle (5pt);
	\node at (3.5,4.5) {\tiny$1$};
	\node at (8.3,-1.7) {$p$};
	\node at (-1.7,8) {$q$};
	\fill (1,5) circle (3pt);
	\fill (0.5,5.5) circle (3pt);
	\fill (0,6) circle (3pt);
	\fill (5,1) circle (3pt);
	\fill (5.5,0.5) circle (3pt);
	\fill (6,0) circle (3pt);
\end{tikzpicture}
\end{equation*}
\end{center}
forcing $\mathrm{rk}(N_{\sigma})=1$ (rather than $0$). (In different terms, each $T_{\sigma}$ is a nontrivial symplectic transvection.) But this is impossible for $\bigwedge^n$ of a nilpotent endomorphism of $\CC^{2n}$.

To complete the (dis)proof, then, we apply \cite[Thm.~2.9]{KL2}: for a nodal degeneration $Y\rightsquigarrow Y_{\sigma}$ of an odd-dimensional hypersurface of a smooth projective variety $\mathbf{P}$ satisfying Bott vanishing, the rank of $\delta$ is the number $m$ of nodes minus the rank of the map $$\mathrm{ev}\colon H^0(\mathbf{P},K_{\mathbf{P}}(\tfrac{3n+1}{2}Y_{\sigma}))\to \CC^m$$ given by evaluation at the nodes. The proof in [loc.~cit.] is equivariant in $\A$, and so we find that $\delta^{\chi}=0$ $\iff$ $\mathrm{ev}$ is nonzero on $H^0(\mathbf{P},K_{\mathbf{P}}(\tfrac{3n+1}{2}Y_{\sigma}))^{\chi}$, which can be checked at any node. Writing $\vec{\mathsf{e}}_1:=\sum_{i=0}^nY_i\tfrac{\d}{\d Y_i}$, $\vec{\mathsf{e}}_2:=\sum_{j=0}^{2n+1}Z_j\tfrac{\d}{\d Z_j}-2\vec{\mathsf{e}}_1$, and $\Omega:=\langle \vec{\mathsf{e}}_2,\langle \vec{\mathsf{e}}_1,d\UZ\wedge d\underline{Y}\rangle\rangle$, one checks that 
\begin{equation}\label{E12}
Y_0Z_0^2\Omega/(F_{t=0})^{\frac{3n+1}{2}}	
\end{equation}
is a well-defined section of $K_{\mathbf{P}}(\tfrac{3n+1}{2}Y_{\sigma})$ (cf.~\cite[\S4.5]{Ke}); and evidently $\A$ acts on it through $\chi$. Clearly, it is nonzero on the fiber of $K_{\mathbf{P}}(\tfrac{3n+1}{2}Y_{\sigma})$ at any of the nodes \eqref{E9}.

\subsection*{Sketch of proof of Theorem \ref{E4}}

Returning to the local picture \eqref{E5}, we now seek a more concrete topological description of the orthogonal reflections ($n$ even) and symplectic transvections ($n$ odd) through which $T_{\sigma}$ acts on $H$. So let $U_0\subset \mathbb{A}^n$ be the complement of the hyperplanes $x_1=0,\ldots,x_n=0$ and $x_1+\cdots +x_n=1$, and $\LL_0$ the rank-$1$ local system on $U_0$ with monodromies $-1$ about each of them. While the singularity $x_{\sigma}\overset{\imath_{\sigma}}{\hookrightarrow}X_{\sigma}$ ``at $\uo$'' in \eqref{E5} isn't isolated, the vanishing-cycle complex $\phi_t\QQ_{\X}$ is nothing but $\imath^{\sigma}_*V[-n]$, where $V:=\IH^n(\mathbb{A}^n,\LL_0)$ (as MHS). We begin with a local analogue of the covering argument just seen.

\begin{lem}\label{E13}
\textup{(i)} $\IH^n(\mathbb{A}^n,\LL_0)\cong \QQ(-\lfloor\tfrac{n+1}{2}\rfloor).$

\textup{(ii)} Local monodromy $T_{\sigma}$ acts on $V$ through multiplication by $(-1)^{n+1}$.

\textup{(iii)} The canonical map $\can_{\sigma}\colon H_{\lim}\to V$ is onto.
\end{lem}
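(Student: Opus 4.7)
The plan is to prove (i) and (ii) by passing to a smooth finite Galois cover where Milnor fibers are visible and Picard--Lefschetz applies, and then to handle (iii) by combining a weight argument (for $n$ even) with the Cayley-trick/Bott-vanishing input already in place earlier in the section (for $n$ odd). For (i), I would consider the smooth affine quadric $\tilde{Y} := \{z_{n+1}^2 + z_1^2 + \cdots + z_n^2 = 1\} \subset \mathbb{A}^{n+1}$ with $\pi\colon \tilde{Y} \to \mathbb{A}^n$, $(z_1,\ldots,z_{n+1}) \mapsto (z_1^2,\ldots,z_n^2)$; this is a Galois $2^{n+1}$-cover with group $G := (\ZZ/2)^{n+1}$ (generators $\epsilon_i\colon z_i \mapsto -z_i$), branched precisely along the arrangement defining $U_0$. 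Since $\tilde{Y}$ is smooth, $\pi_*\QQ_{\tilde{Y}}$ decomposes into $G$-isotypic summands, the summand for the sign character $\chi\colon \epsilon_i \mapsto -1$ being $\jmath_{!*}\LL_0$ (parallel to $H \cong \IH^n(\PP^n,\LL)$ around \eqref{E1}); hence $\IH^n(\mathbb{A}^n,\LL_0) \cong H^n(\tilde{Y})^\chi$. Now $\tilde{Y}$ is diffeomorphic to $T^*S^n$ via $z = a + \ay b \mapsto (a/|a|,b)$ (using $|a|^2 - |b|^2 = 1$ and $a\cdot b = 0$), so $H^n(\tilde{Y}) = \QQ$ is spanned by $[S^n]$; each $\epsilon_i$ acts as the orientation-reversing reflection $u_i \mapsto -u_i$ on $S^n$, hence multiplies by $-1$, identifying the $G$-character as exactly $\chi$ and giving $H^n(\tilde{Y})^\chi = \QQ$. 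The Hodge type then follows from the Gysin sequence for $\tilde{Y} \hookrightarrow \bar{Y}$ (the smooth projective quadric in $\PP^{n+1}$, with boundary the smooth $(n-1)$-quadric at infinity), together with the standard Hodge structure on smooth quadrics: $H^n(\tilde{Y}) \cong \QQ(-\lfloor(n+1)/2\rfloor)$ in both parities of $n$.

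For (ii), I take $\tilde{X} := \mathbb{A}^{n+1}_z$ with the smooth $2^n$-to-$1$ cover $\tilde{X} \to X$ given by $(z_1,\ldots,z_{n+1}) \mapsto (z_1\cdots z_{n+1}, z_1^2, \ldots, z_n^2, z_{n+1}^2 + \sum_{i=1}^n z_i^2)$. The pulled-back function $\tilde{f} := f\circ\pi = \sum_{i=1}^{n+1} z_i^2$ has an ordinary $A_1$ singularity at the origin, whose Milnor fiber is exactly the affine quadric $\tilde{Y}$ of (i); hence $H^n_{\van}(\tilde{X}) = \QQ$, and Picard--Lefschetz yields local monodromy $(-1)^{n+1}$. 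By proper base change, $V$ is identified with the $\chi$-isotypic piece of the stalk $(\phi_{\tilde{f}}\QQ_{\tilde{X}})_{\uo}$ under the $G$-action, which by the reflection calculation in (i) equals the full one-dimensional stalk; since $T_\sigma$ commutes with the Galois action, it acts on $V$ by the same scalar $(-1)^{n+1}$.

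For (iii), by exactness of the vanishing-cycle sequence, it suffices to show that $\delta|_V\colon V \to H^{n+1}_{\mathrm{ph}}(X_\sigma)$ vanishes. When $n$ is even, $V$ is pure of type $(\tfrac{n}{2},\tfrac{n}{2})$ and weight $n$, while $H^{n+1}_{\mathrm{ph}}(X_\sigma)$ is pure of weight $n+1$ by Cor.~\ref{A9} applied to the proper family $\X \to \PP^1$; hence any morphism of MHS between them is automatically zero. When $n$ is odd the weights match, and global input enters: under the identification $V \cong V_{y_\sigma}^\chi$ at the nodes of the Cayley hypersurface $Y_\sigma$ from \eqref{E6}--\eqref{E10}, the $\A$-equivariant form of \cite[Thm.~2.9]{KL2} applied to the section \eqref{E12} (which is $\chi$-isotypic and non-vanishing at the nodes \eqref{E9}) forces $\delta^\chi = 0$, and hence $\delta|_V = 0$. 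The main obstacle is thus (iii) for $n$ odd: the weight argument collapses, and one really needs the equivariant residue computation at \eqref{E12} on the Cayley hypersurface to conclude.
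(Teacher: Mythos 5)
Your proposal matches the paper's proof in all essential respects: you use the same $2^{n+1}$-cover of $\mathbb{A}^n$ by the smooth affine quadric $\sum z_i^2=1$ to identify $\IH^n(\mathbb{A}^n,\LL_0)$ with the sign-isotypic piece of $H^n$ of the quadric, compute the Hodge type from the compactification by the smooth projective quadric, extract the monodromy from the $A_1$-degeneration $\sum z_i^2 = t$, and for (iii) reduce to the earlier Cayley-hypersurface analysis. The only cosmetic differences are that the paper phrases (ii) via the mixed spectrum $[\tfrac{n+1}{2}]$ rather than Picard--Lefschetz, argues the $G$-character through the $\A_0^\circ$-invariance of the real vanishing sphere in $H^n_c$ rather than the $T^*S^n$ model, and for the $n$-even case of (iii) uses the $T_\sigma$-eigenvalue mismatch on $Y_\sigma$ (where the singularities are genuinely isolated, making Cor.~\ref{A9} applicable as stated) rather than your weight argument on $X_\sigma$ directly.
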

\begin{proof}
Define maps 
\begin{itemize}[leftmargin=0.5cm]
\item $f_0\colon \mathbb{A}^n\hookrightarrow \mathbb{A}^{n+1}$ by $\ux\mapsto (\ux,1-\sum_{i=1}^n x_i)$, and
\item $\phi_0\colon \mathbb{A}^{n+1}\to\mathbb{A}^{n+1}$ by squaring all coordinates $z_i$;
\end{itemize}
then $\hat{X}_0:=\phi_0^{-1}(f_0(\mathbb{A}^n))\subset \mathbb{A}^{n+1}$ is the quadric hypersurface $\sum_{i=1}^{n+1}z_i^2=1$. The group $\A_0:=(\ZZ/2\ZZ)^{n+1}/\NP(\ZZ/2\ZZ)$ acts on $\hat{X}_0$ (multiplying coordinates by $\pm1$), with quotient $\mathbb{A}^n$. The quotient $q_0\colon \hat{X}_0\twoheadrightarrow X_0$ by the augmentation subgroup $\A_0^{\circ}$ yields the obvious $2{:}1$ branched cover of $\mathbb{A}^n$, with $H^n(X_0)\cong \IH^n(\mathbb{A}^n,\LL_0)$.

By the localization sequence for $\hat{X}_0$ (relative to its closure $\overline{\hat{X}}_0\subset \PP^{n+1}$) and weak Lefschetz, one easily shows that $H^j(\hat{X}_0)=0$ for $j\neq n$,\footnote{This simply recovers perversity of $\phi_f\QQ_{\X}[n]$.} and $H^n(\hat{X}_0)\cong \QQ(-\lfloor\tfrac{n+1}{2}\rfloor)$. (Writing $\d\hat{X}_0=\overline{\hat{X}}_0\sm\hat{X}_0$, this is $H^n(\overline{\hat{X}}_0)/H^{n-2}(\d\hat{X}_0)(-1)$ for $n$ even, and $\ker\{H^{n-1}(\d\hat{X}_0)(-1)\to H^{n+1}(\overline{\hat{X}}_0)\}$ for $n$ odd.) A generator for the \emph{dual} group $H^n_c(\hat{X}_0)$ is given by the real (vanishing) $n$-sphere $S^n_1:=\{\sum z_i^2=1\}\cap \RR^{n+1}$, whose class is invariant under $\A_0^{\circ}$ hence comes from $H^n_c(X_0)$. This gives (i).

The degeneration is modeled by replacing $\sum z_i^2=1$ by $\sum z_i^2=t$; as the spectrum of $\sum z_i^2$ is $[\tfrac{n+1}{2}]$, the monodromy is as described in (ii). Finally, (iii) follows from the last subsection since $\can_{\sigma}$ identifies with $\can^{\chi}$ in \eqref{E11}.
\end{proof}

The vanishing sphere $S^n_t:=\{\sum z_i^2=t\}\cap \RR^{n+1}$ in $\hat{X}_0$ has image in $X_0$ (by $q_0$) given by the double cover of $(\cap_{i=1}^n\{x_i\geq 0\})\cap \{\sum x_i\leq t\}$.  Let its image in $X$ (essentially via $\can^{\chi}\colon H^n_c(X_0)\to H^n(X)$) be denoted by $\nu_{\sigma}$; this is the vanishing cycle at $\sigma$, a ``double simplex'' branched along $\H_s$ and $n$ additional hyperplanes. It follows from (iii) that $T_{\sigma}$ is a transvection/reflection in $\nu_{\sigma}$. More precisely, rescaling $Q$ to have $Q(\nu_{\sigma},\nu_{\sigma})=\tfrac{1+(-1)^n}{2}$,
\begin{equation}\label{E14}
T_{\sigma}(u)=u-2Q(u,\nu_{\sigma})\nu_{\sigma}	
\end{equation}
for $u\in H$.

Now consider the general setting where $L_{2n+1}=L_s$, $L_0=\{X_0=0\}$, and the remaining $L_i$ are in general position. An easy extension of \eqref{E1} gives $H\cong \IH^n_c(\mathbb{A}^n,\LL)\cong H^n_c(X\sm L_0)$, whence $H^n_{\mathrm{pr}}(X)$ is spanned by double simplices branched along $n+1$ of the $L_{i\geq 0}$. Obviously all of these can be rewritten as $\ZZ$-linear combinations of double simplices branched along $L_s$ and $n$ of the $\{L_i\}_{1\leq i\leq 2n}$; call these $\nu_I$, where $I\subset \{1,\ldots,2n\}$ with $|I|=n$. Since $\mathrm{rk}H=\binom{2n}{n}$ and there are $\binom{2n}{n}$ of these vanishing cycles, they form a $\QQ$-basis of $H=H_{\text{pr}}^n(X)$. Write $T_I$ for the corresponding monodromies, and $\Gamma\leq \mathrm{Aut}(H_{\CC},Q)$ for the smallest $\CC$-algebraic group containing them; clearly $\Gamma\leq \widetilde{\Pi}_{\CC}$. Moreover, we note that if $|I\cap I'|=n-1$, then $Q(\nu_I,\nu_{I'})=\pm 1$ (rescaling as above, compatibly with \eqref{E14}).

Suppose then that $|I\cap I'|=n-1$.  If $n$ is odd, then $T_I(\nu_{I'})=\nu_I\pm \nu_{I'}=\pm T_{I'}^{-1}(\nu_I)$, whence $\nu_{I'}$ is in the $\Gamma$-orbit of $\nu_I$; so \emph{all} the $\nu_J$ are in the $\Gamma$-orbit of $\nu_I$. If $n$ is even, then reasoning as in \cite[\S4.4]{De},\footnote{See the paragraph after Lemme $4.4.3^{\text{s}}$.} $T_I T_{I'}^{\pm 1}$ is a transvection and its Zariski closure a $\mathbb{G}_a$ including transformations which send $\nu_I \mapsto \nu_{I'}$ and vice-versa; once again, all the $\nu_J$ are in the $\Gamma$-orbit of a single $\nu_I$.

Let $R:=\Gamma.\nu_I$ denote this orbit.  Obviously it spans $H_{\CC}$. Furthermore, for any $\delta\in R$, $\Gamma$ contains the transvection/reflection $T_{\delta}$: writing $\delta=\gamma.\nu_I$ ($\gamma\in \Gamma$), we have $T_{\delta}=T_{\gamma.\nu_I}=\gamma T_I\gamma^{-1}\in \Gamma$. So $\Gamma$ is in fact the $\CC$-algebraic closure of the $\{T_{\delta}\}_{\delta \in R}$, and we are exactly in the situation of \cite[Lemme 4.4.2]{De}. Conclude that $\Gamma=\mathrm{Aut}(H_{\CC},Q)$, hence $\widetilde{\Pi}=\mathrm{Aut}(H,Q)$, and thus $\Pi=\mathrm{Aut}(H,Q)^{\circ}$, proving Theorem \ref{E4}.

\begin{rem}\label{E15}
After writing this paper we encountered the article by \cite{Xu} which treats the more general setting of $r$-covers of $\PP^n$ branched along hyperplanes by considering local monodromies (as we have just done).  The argument is necessarily more complicated and technical than ours.  However, in the case $r=2$ (i.e., our setting) it appears to be incomplete.

If $r=2$ and $n$ is odd,  \cite[Prop.~3.4]{Xu} does not actually establish that $e_{(1)}$ (in the notation of [loc.~cit.]) is nonzero; this is exactly the issue regarding possible nonvanishing of $\delta$ dealt with above.  One could read \cite[Prop.~4.2]{Xu} as confirming this in retrospect, but this makes the argument quite convoluted.

If $r=2$ and $n$ is even, the proof of \cite[Prop.~4.2]{Xu} is wrong, as it makes use of the (false) statement that $\mathrm{Sp}_{2n}(\RR)$ ``does not admit any nontrivial one-dimensional invariant subspace'' in its action on $\bigwedge^n\RR^{2n}$.
\end{rem}

\end{document}